\newtheorem{theorem}{Theorem}[section]
\newtheorem{lemma}{Lemma}[section]
\newtheorem{definition}{Definition}[section]
\numberwithin{equation}{section}
\numberwithin{table}{section}
\numberwithin{figure}{section}
\title{On a nonlinear Diophantine equation with powers of
	three consecutive $k$--Lucas Numbers}
\author{Herbert Batte$^{1,*} $ and Florian Luca$^{1,2}$}
\date{}
\begin{document}
	\maketitle
	\abstract{ Let $(L_n^{(k)})_{n\geq 2-k}$ be the sequence of $k$--generalized Lucas numbers for some fixed integer $k\ge 2$ whose first $k$ terms are $0,\ldots,0,2,1$ and each term afterwards is the sum of the preceding $k$ terms. In this paper, we completely solve the nonlinear Diophantine equation $\left(L_{n+1}^{(k)}\right)^x+\left(L_{n}^{(k)}\right)^x-\left(L_{n-1}^{(k)}\right)^x=L_m^{(k)}$, in nonnegative integers $n$, $m$, $k$, $x$, with $k\ge 2$.} 
	
	{\bf Keywords and phrases}: $k$--generalized Lucas numbers; linear forms in logarithms; Baker--Davenport reduction method, LLL--algorithm.
	
	{\bf 2020 Mathematics Subject Classification}: 11B39, 11D61, 11D45.
	
	\thanks{$ ^{*} $ Corresponding author}
	\section*{Statements and Declarations}
	
	\textbf{Competing Interests:} The authors declare that they have no conflict of interest.
	\\ \\
	\textbf{Ethical Approval:} This article does not contain any studies involving human participants or animals performed by any of the authors.
	\\ \\
	\textbf{Data Availability Statement:}
	Data sharing is not applicable to this article as no datasets were generated or analyzed during the current study.
	\\ \\
	\textbf{Author Contributions:} The authors contributed equally to the conceptualization, analysis, and writing of the manuscript.
		
	\section{Introduction}\label{intro}
	\subsection{Background}\label{sec:1.1}
	For any integer \( k \ge 2 \), the sequence of $k$--generalized Lucas numbers is defined by the recurrence relation:
	\[ L_n^{(k)} = L_{n-1}^{(k)} + \cdots + L_{n-k}^{(k)}, \quad \text{for all} \ n \ge 2, \]
	with the initial terms set as \( L_0^{(k)} = 2 \) and \( L_1^{(k)} = 1 \) for all \( k \ge 2 \), and \( L_{2-k}^{(k)} = \cdots = L_{-1}^{(k)} = 0 \) for \( k \ge 3 \). For \( k = 2 \), this reduces to the classical sequence of Lucas numbers, and the superscript ${}^{(k)}$ is typically omitted.
	
	The study of sequences such as the Fibonacci and Lucas numbers, and their generalizations, has led to the discovery of numerous interesting identities and Diophantine equations. Similar to how various authors have explored identities involving the $k$--Fibonacci numbers, the $k$--Lucas numbers also exhibit a rich structure that can be investigated through the lens of number theory. In \cite{GGL}, Gómez et al. studied the Diophantine equation 
	\begin{align}\label{eqGGL}
		\left(F_{n+1}^{(k)}\right)^x+\left(F_{n}^{(k)}\right)^x-\left(F_{n-1}^{(k)}\right)^x=F_m^{(k)},
	\end{align}
	in nonnegative integers $n$, $m$, $k$, $x$, with $n\ge 1$ and $k\ge 2$, where $F_r^{(k)}$ is the $r^{\text{th}}$ term of the $k$--Fibonacci sequence. They showed that the Diophantine eq. \eqref{eqGGL} does not have non--trivial solutions $(k, n, m, x)$ with $k \ge 2$, $n \ge 1$, $m \ge 2$ and $x \ge 1$.
	
	Motivated by the work in \cite{GGL}, we study the same Diophantine equation but with $k$--Lucas numbers. Specifically, we solve the nonlinear Diophantine equation 
	\begin{align}\label{eq:main}
		\left(L_{n+1}^{(k)}\right)^x+\left(L_{n}^{(k)}\right)^x-\left(L_{n-1}^{(k)}\right)^x=L_m^{(k)},
	\end{align}
	in nonnegative integers $n$, $m$, $k$, $x$, with $k\ge 2$. We prove the following result.
	\subsection{Main Result}\label{sec:1.2}
	\begin{theorem}\label{thm1.1} 
		Let $(L_n^{(k)})_{n\geq 2-k}$ be the sequence of $k$--generalized Lucas numbers. Then, the only integer solutions $(n,m,k,x)$ to Eq. \eqref{eq:main} with $n\ge 0,~m\ge 0,~k\ge 2$ and $x\ge 0$ (with the convention that $0^0:=1$ in case $(n,x)=(0,0)$ and $k\ge 3$) are
		\begin{align*}
			(n,m,k,x)\in \{(n,1,k,0),(1,0,k,1):k\ge 2\}\cup\{(0,2,k,1),(1,3,k,2):k\ge 3\}\cup \{(0,3,2,1),(0,3,2,2)\}.
		\end{align*}
	\end{theorem}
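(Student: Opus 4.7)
The plan is to mimic the strategy of \cite{GGL} for the $k$--Fibonacci analogue, adapted to the arithmetic of $k$--Lucas numbers. First I would record the standard preliminaries: the Binet--like formula $L_n^{(k)}=\sum_{i=1}^{k}(2\alpha_i-1)\alpha_i^{n-1}/(\alpha_i+2(\alpha_i-1)^2/(k+1))$ (or an appropriate form), the fact that the dominant root $\alpha=\alpha(k)$ satisfies $2(1-2^{-k})<\alpha<2$, the exponential bounds $\alpha^{n-1}\le L_n^{(k)}\le 2\alpha^n$ (valid once $n\ge 1$), and the closed forms $L_n^{(k)}=3\cdot 2^{n-2}$ valid for $2\le n\le k$ (which will give the sporadic families $(0,2,k,1)$ and $(1,3,k,2)$). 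Then I would dispose of the tiny cases $x\in\{0,1\}$ by direct arithmetic (which produces $(n,1,k,0)$, $(1,0,k,1)$, $(0,2,k,1)$, $(0,3,2,1)$), and separately settle $k=2$ by the classical Lucas Binet formula together with a short Baker--Davenport reduction.

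So the real work is the case $k\ge 3$, $x\ge 2$, $n\ge 1$. First I would estimate magnitudes: from $L_{n+1}^{(k)}\asymp\alpha^{n+1}$ one gets $m=x(n+1)+O(1)$, giving a linear relation between the four unknowns that is the backbone of all subsequent estimates. I would then factor out the dominant term, writing
\begin{equation*}
\left(L_{n+1}^{(k)}\right)^x\!\Bigl(1+\bigl(L_n^{(k)}/L_{n+1}^{(k)}\bigr)^x-\bigl(L_{n-1}^{(k)}/L_{n+1}^{(k)}\bigr)^x\Bigr)=L_m^{(k)},
\end{equation*}
and using $L_j^{(k)}/L_{j+1}^{(k)}\to 1/\alpha$ (with effective error terms) replace the equation by an approximation of the form $C\alpha^{x(n+1)}\approx\alpha^m$ up to an exponentially small error controlled by $\alpha^{-n}$. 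Taking logarithms produces a linear form $\Lambda$ in logarithms of $\alpha$ and algebraic numbers arising from the Binet coefficients; Matveev's theorem then yields a first upper bound of the shape $x<C(k)\log n$ (with $C(k)$ polynomial in $k$). A second application, this time using the sharper expression $\bigl(1+\beta^x-\beta^{2x}\bigr)$ with $\beta:=L_n^{(k)}/L_{n+1}^{(k)}$ as the middle factor, gives a bound on $x$ alone that is polynomial in $k$.

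Next I would separate two regimes according to the size of $k$ relative to $n$. If $n\le k$, the closed form $L_n^{(k)}=3\cdot 2^{n-2}$ (for $2\le n\le k$) reduces \eqref{eq:main} to $3^x\cdot 2^{(n-2)x}+3^x\cdot 2^{(n-3)x}-3^x\cdot 2^{(n-4)x}=L_m^{(k)}$ after clearing small $n$, which is a $2$--adic/$3$--adic equation easily handled by factorisation and the structure of $L_m^{(k)}$ (this regime yields the $(0,2,k,1)$ and $(1,3,k,2)$ families and nothing else). If $n>k$, I use the fact that for large $k$ the dominant root $\alpha$ is so close to $2$ that expansions $\alpha=2-\delta_k$ with $\delta_k=O(2^{-k})$ let me convert the linear forms in $\log\alpha$ into linear forms in $\log 2$ up to negligible error, after which the Matveev bound sharpens to an absolute bound on $(x,k,n)$.

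The main obstacle, as usual in this circle of problems, is the final descent: the Matveev bounds leave a huge but finite search region in $(k,n,x,m)$, and one must run a Baker--Davenport / LLL reduction on the linear form $m\log\alpha-x(n+1)\log\alpha-\log C$ (with $C$ the Binet constant depending on $k$) for each $k$ up to the threshold, verifying computationally that no unlisted solution survives. Writing out the two Matveev applications cleanly, keeping track of the $k$--dependence of the heights of the algebraic numbers involved (which come from the companion polynomial of the $k$--Lucas recurrence and have bounded but nontrivial heights), and organising the LLL reduction uniformly in $k$ will be the technically delicate parts. Once this is done, collating all cases yields precisely the list of solutions stated in Theorem \ref{thm1.1}.
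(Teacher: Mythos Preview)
Your overall architecture---dispose of $x\in\{0,1\}$, split into $n\le k$ versus $n>k$, apply Matveev to a linear form extracted from the Binet approximation, then use the approximation $\alpha\approx 2$ for large $k$ and an LLL/Baker--Davenport descent for bounded $k$---is exactly the route the paper takes. Two points in your sketch, however, are genuine gaps rather than mere omissions of detail.

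First, your treatment of the regime $n\le k$ is too optimistic. You write that the closed form $L_n^{(k)}=3\cdot 2^{n-2}$ reduces \eqref{eq:main} to ``a $2$--adic/$3$--adic equation easily handled by factorisation''. That is only true when \emph{both} sides are in closed form, i.e.\ when $m\le k$ as well; and that subcase is indeed dispatched in a line. But since $m>nx-3\ge 2n-3$, the generic situation has $m>k$, so $L_m^{(k)}\neq 3\cdot 2^{m-2}$ and no purely $p$--adic argument applies. The paper needs two full Matveev applications here (first to bound $x$ in terms of $k$ and $m$, then to bound $m$ polynomially in $k$), and this regime consumes roughly as much work as the regime $n>k$.

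Second, your plan for large $k$---``convert the linear forms in $\log\alpha$ into linear forms in $\log 2$ up to negligible error, after which the Matveev bound sharpens to an absolute bound''---misses the mechanism that actually produces an absolute bound. Replacing $\alpha$ by $2$ in a Matveev estimate does not by itself remove the $k$--dependence, because the heights of the Binet constants still grow with $k$. What the paper does is observe that, once $k$ is large enough that both sides are well approximated by $3\cdot 2^{\text{something}}$, the equation collapses to an inequality of the form
\[
\bigl|(x-1)\log 3 - z\log 2\bigr| < \frac{c}{2^{\lambda k}}+\delta(x),
\]
a linear form in \emph{two} logarithms of \emph{rational} numbers. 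This permits the sharper Laurent--Mignotte--Nesterenko bound (with $D=1$) followed by a continued-fraction argument on $\log 3/\log 2$, which forces $x\le 10$ and then a trivial finite check eliminates $k>800$ entirely. Without isolating this two-logarithm form you will not obtain an absolute bound on $k$.

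A minor remark: the paper does not handle $k=2$ by a separate Baker--Davenport reduction as you propose; it treats only $k=2$, $n\in\{0,2\}$ by ad hoc classical arguments (Carmichael's primitive divisor theorem and a Catalan-type step), and otherwise lets $k=2$ flow through the general machinery.
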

	
	\section{Methods}
	\subsection{Preliminaries}
	It is known that 
	\begin{align}\label{eq2.2}
		L_n^{(k)} = 3 \cdot 2^{n-2},\qquad \text{for all}\qquad 2 \le n \le k.
	\end{align}
	Additionally, $L_{k+1}^{(k)}=3\cdot 2^{k-1}-2$ and by induction one proves that 
	\begin{equation}
		\label{eq:32}
		L_n^{(k)}<3\cdot 2^{n-2}\qquad {\text{\rm holds for all}}\qquad n\ge k+1.
	\end{equation}
	Next, we revisit some properties of the $k$--generalized Lucas numbers. They form a linearly recurrent sequence of characteristic polynomial
	\[
	\Psi_k(x) = x^k - x^{k-1} - \cdots - x - 1,
	\]
	which is irreducible over $\mathbb{Q}[x]$. The polynomial $\Psi_k(x)$ possesses a unique real root $\alpha(k)>1$ and all the other roots are inside the unit circle, see \cite{MIL}. The root  $\alpha(k):=\alpha$ is in the interval
	\begin{align}\label{eq2.3}
		2(1 - 2^{-k} ) < \alpha < 2
	\end{align}
	as noted in \cite{WOL}. As in the classical case when $k=2$, it was shown in \cite{BRL} that 
	\begin{align}\label{eq2.4}
		\alpha^{n-1} \le L_n^{(k)}\le2\alpha^n, \quad \text{holds for all} \quad n\ge0, ~k\ge 2.
	\end{align}
	Also, $L_{-1}^{(k)}=-1$ if $k=2$ and $0$ otherwise. In particular, combining \eqref{eq2.4} and \eqref{eq:main}, we have, if $n\ge 1$, 
	\begin{align*}
		\alpha^{m-1} \le L_m^{(k)}=\left(L_{n+1}^{(k)}\right)^x+\left(L_{n}^{(k)}\right)^x-\left(L_{n-1}^{(k)}\right)^x<2\left(L_{n+1}^{(k)}\right)^x\le \alpha^{(n+3)x+2},
	\end{align*}
	where we have used the fact that $2<\alpha^2$, for all $k\ge 2$. The above inequality also holds for $n=0$ since in this case 
	$$
	(L_{n+1}^{(k)})^x+(L_n^{(k)})^x-(L_{n-1}^{(k)})^x\le 1+2^x+1\le 2^{x+1}<\alpha^{2x+2}<\alpha^{(n+3)x+2}.
	$$
	This gives $m<(n+3)x+3$. On the other hand, if $n\ge 1$,  and $x\ge 2$, we have
	\begin{align*}
		\alpha^{m+2}>2\alpha^{m} \ge L_m^{(k)}=\left(L_{n+1}^{(k)}\right)^x+\left(L_{n}^{(k)}\right)^x-\left(L_{n-1}^{(k)}\right)^x>\alpha^{nx}+\alpha^{(n-1)x}-2^x\alpha^{(n-1)x}\ge \alpha^{nx-1},
	\end{align*}
	from which we have $m>nx-3$. The above inequality also holds for $x=1$ since in this case 
	$$
	\alpha^{m+2}>2\alpha^m\ge L_m^{(k)}=L_{n+1}^{(k)}+L_n^{(k)}-L_{n-1}^{(k)}\ge 2L_{n}^{(k)}> \alpha^{n},\quad {\text{\rm so}}\qquad m>n-3.
	$$
	Finally, the above inequality holds for $n=0$ as well since we assume that $m\ge 0$. Therefore, we have
	\begin{align}\label{m_b}
		nx-3 <m<(n+3)x+3.
	\end{align}
	
	Let $k\ge 2$ and define
	\begin{equation}
		\label{eq:fk}
		f_k(x):=\dfrac{x-1}{2+(k+1)(x-2)}.
	\end{equation}
	We have 
	$$
	\frac{df_k(x)}{dx}=-\frac{(k-1)}{(2+(k+1)(x-2))^2}<0\qquad {\text{\rm for~all}}\qquad x>0.
	$$
	In particular, inequality \eqref{eq2.3} implies that
	\begin{align}\label{eq2.5}
		\dfrac{1}{2}=f_k(2)<f_k(\alpha)<f_k(2(1 - 2^{-k} ))\le \dfrac{3}{4},
	\end{align}
	for all $k\ge 3$. It is easy to check that the above inequality holds for $k=2$ as well. Further, it is easy to verify that $|f_k(\alpha_i)|<1$, for all $2\le i\le k$, where $\alpha_i$ are the remaining 
	roots of $\Psi_k(x)$ for $i=2,\ldots,k$.
	
	Moreover, it was shown in \cite{BRL} that
	\begin{align}\label{eq2.6}
		L_n^{(k)}=\displaystyle\sum_{i=1}^{k}(2\alpha_i-1)f_k(\alpha_i)\alpha_i^{n-1}~~\text{and}~~\left|L_n^{(k)}-f_k(\alpha)(2\alpha-1)\alpha^{n-1}\right|<\dfrac{3}{2},
	\end{align}
	for all $k\ge 2$ and $n\ge 2-k$. This means that 
	\begin{equation}
		\label{eq:Lnwitherror}
		L_n^{(k)}=f_k(\alpha)(2\alpha-1)\alpha^{n-1}+e_k(n), \qquad {\text{\rm where}}\qquad |e_k(n)|<1.5.
	\end{equation} 
	The left expression in \eqref{eq2.6} is known as the Binet formula for $L_n^{(k)}$. Furthermore, the right inequality in \eqref{eq2.6} shows that the contribution of the zeros that are inside the unit circle to $L_n^{(k)}$ is small. 
	
	Next, we state the following result which we shall use later in the proof of the main result. It is stated as Lemma 3 in \cite{Fay}.
	\begin{lemma}[Lemma 3 in \cite{Fay}]\label{fay}
		Let $k \ge 2$, $c \in (0,1)$ and $2\le n < 2^{ck}$. Then
		
		\[
		L_n^{(k)} = 3 \cdot 2^{n-2}(1 + \zeta'_n), \quad \text{with} \quad |\zeta'_n| < \begin{cases} 
			\dfrac{4}{2^{(1-c)k}}, & \text{if} \ c \le 0.693, \\
			\dfrac{8.1}{2^{(1-c)k}}, & \text{otherwise}.
		\end{cases}
		\]
		
	\end{lemma}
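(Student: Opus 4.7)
My plan is to compare $L_n^{(k)}$ to the pure-doubling sequence $3\cdot 2^{n-2}$ via an exact expansion, then bound the resulting error term by term using elementary estimates. Everything hinges on contracting the $k$-term defining recurrence into a two-term one.

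I would first derive the auxiliary identity $L_n^{(k)}=2L_{n-1}^{(k)}-L_{n-k-1}^{(k)}$, valid for $n\ge 3$; this is immediate from subtracting the defining recurrence at consecutive indices. Iterating it $n-k$ times, and using the exact value $L_k^{(k)}=3\cdot 2^{k-2}$ from \eqref{eq2.2} as the base, yields the closed-form expansion
$$L_n^{(k)}=3\cdot 2^{n-2}-\sum_{i=1}^{n-k}2^{i-1}L_{n-k-i}^{(k)} \qquad (n\ge k+1),$$
so that $\zeta'_n = -(3\cdot 2^{n-2})^{-1}\sum_{i=1}^{n-k}2^{i-1}L_{n-k-i}^{(k)}$.

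Next I would bound this sum. Applying the universal upper bound $L_j^{(k)}\le 3\cdot 2^{j-2}$ for $j\ge 1$ (from \eqref{eq2.2} and \eqref{eq:32}) to the $n-k-1$ generic terms and plugging in $L_0^{(k)}=2$ for the single boundary term, each generic summand contributes exactly $3\cdot 2^{n-k-3}$ independently of $i$, yielding
$$|\zeta'_n|\le\frac{n-k-1}{2^{k+1}}+\frac{4}{3\cdot 2^k}.$$
Inserting the hypothesis $n<2^{ck}$ gives a bound of the shape $2^{-(1-c)k-1}+O(2^{-k})$, which is already of the form $C\cdot 2^{-(1-c)k}$ advertised in the lemma.

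The chief obstacle is pinning down the two sharp constants (\emph{4} for $c\le 0.693$ and \emph{8.1} otherwise). The threshold $0.693\approx\ln 2$ signals that a refined argument is needed: rather than using the crude upper bound $L_j^{(k)}\le 3\cdot 2^{j-2}$, one substitutes the inductive hypothesis $L_j^{(k)}=3\cdot 2^{j-2}(1+\zeta'_j)$ back inside the error sum and runs a bootstrap on $\max_{j<n}|\zeta'_j|$. For $c\le\ln 2$ the correction factors $(1+\zeta'_j)$ stay in a tight band and the constant $4$ absorbs them; for $c>\ln 2$ these factors compound in an essentially exponential way (the relevant quantity behaves like $e^{n/2^{k+1}}$, which crosses a multiplicative threshold precisely when $c\approx\ln 2$), forcing the slightly weaker constant $8.1$. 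Carrying out this self-referential estimate with explicit control at each step is the only real technical content of the proof.
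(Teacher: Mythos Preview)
The paper does not prove this lemma at all: it is quoted verbatim as Lemma~3 of \cite{Fay} and used as a black box, so there is no ``paper's own proof'' to compare against. I can therefore only assess your argument on its own merits.

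Your approach is sound, and in fact your crude estimate is already stronger than you realise. The telescoped identity
\[
L_n^{(k)}=3\cdot 2^{n-2}-\sum_{i=1}^{n-k}2^{i-1}L_{n-k-i}^{(k)}\qquad(n\ge k+1)
\]
is correct (the case $2\le n\le k$ being trivial since then $\zeta'_n=0$), and so is your bound
$|\zeta'_n|\le (n-k-1)/2^{k+1}+4/(3\cdot 2^{k})$. But now plug in $n<2^{ck}$ directly:
\[
|\zeta'_n|<\frac{2^{ck}}{2^{k+1}}+\frac{4}{3\cdot 2^{k}}
=\frac{1}{2^{(1-c)k}}\left(\frac{1}{2}+\frac{4}{3\cdot 2^{ck}}\right)
<\frac{1}{2^{(1-c)k}}\left(\frac12+\frac43\right)
=\frac{11/6}{2^{(1-c)k}}<\frac{2}{2^{(1-c)k}},
\]
uniformly for every $c\in(0,1)$ and $k\ge 2$. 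This already beats both constants $4$ and $8.1$ in the statement, with no case split. Your ``chief obstacle'' paragraph is therefore chasing a difficulty that is not there: the threshold $0.693$ and the bootstrap you sketch are artefacts of whatever route the authors of \cite{Fay} took (likely via the Binet-type approximation $L_n^{(k)}\approx (2\alpha-1)f_k(\alpha)\alpha^{n-1}$ combined with $\alpha\approx 2(1-2^{-k})$, where an $e^{n/2^{k}}$ factor genuinely appears), not of the problem itself. Your direct combinatorial expansion sidesteps that entirely. Drop the speculative final paragraph and you have a complete, and sharper, proof.
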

	
	Next, we prove the following result. 
	\begin{lemma}\label{lem:Lnx}
		Let \( k \ge 2 \), \( x \), \( n \) be positive integers. Then
		\[
		\left(L_n^{(k)}\right)^x = f_k(\alpha)^x (2\alpha-1)^x\alpha^{(n-1)x}(1 + \eta_n),
		\]
		with 
		\[
		|\eta_n| < \dfrac{1.5xe^{1.5x/\alpha^{n-1}}}{\alpha^{n-1}}.
		\]
	\end{lemma}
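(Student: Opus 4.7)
\textbf{Proof plan for Lemma~\ref{lem:Lnx}.}

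The natural starting point is the Binet-type approximation \eqref{eq:Lnwitherror}, which rewrites as
$$L_n^{(k)}=f_k(\alpha)(2\alpha-1)\alpha^{n-1}(1+\delta_n),\qquad \delta_n:=\frac{e_k(n)}{f_k(\alpha)(2\alpha-1)\alpha^{n-1}}.$$
Raising this identity to the $x$-th power immediately gives
$$\left(L_n^{(k)}\right)^x=f_k(\alpha)^x(2\alpha-1)^x\alpha^{(n-1)x}(1+\delta_n)^x,$$
so the lemma reduces to bounding $\eta_n:=(1+\delta_n)^x-1$. The work therefore splits into two small estimates: a bound on $|\delta_n|$, and a bound on $|(1+\delta_n)^x-1|$ in terms of $x|\delta_n|$.

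For the first, I would combine the lower bound $f_k(\alpha)>1/2$ furnished by \eqref{eq2.5} with the inequality $2\alpha-1>3-2^{1-k}\ge 2$ for $k\ge 2$, coming from \eqref{eq2.3}. This gives $f_k(\alpha)(2\alpha-1)>1$, so using $|e_k(n)|<1.5$ yields
$$|\delta_n|<\frac{1.5}{\alpha^{n-1}}.$$
For the second, since $x$ is a positive integer I would expand by the binomial theorem and use $\binom{x}{j}\le x^j/j!$:
$$|\eta_n|=\bigl|(1+\delta_n)^x-1\bigr|\le \sum_{j=1}^{x}\binom{x}{j}|\delta_n|^j\le \sum_{j=1}^{\infty}\frac{(x|\delta_n|)^j}{j!}=e^{x|\delta_n|}-1.$$
Then apply the elementary inequality $e^y-1\le ye^y$ (valid for $y\ge 0$ since both sides vanish at $0$ and the right-hand side has the larger derivative) with $y=x|\delta_n|$ to conclude
$$|\eta_n|\le x|\delta_n|\,e^{x|\delta_n|}<\frac{1.5x}{\alpha^{n-1}}\,\exp\!\left(\frac{1.5x}{\alpha^{n-1}}\right),$$
which is exactly the claim.

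I do not foresee a genuine obstacle here: the only point that requires a line of checking is the uniform lower bound $f_k(\alpha)(2\alpha-1)>1$ for every $k\ge 2$ (in particular, handling $k=2$, where \eqref{eq2.5} is only asserted directly for $k\ge 3$ and must be verified separately, though this reduces to a one-line numerical check with $\alpha=(1+\sqrt 5)/2$). Once that bound is in hand, the remainder is a purely formal manipulation combining the Binet error estimate with the binomial series and the inequality $e^y-1\le ye^y$.
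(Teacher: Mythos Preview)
Your proof is correct and follows essentially the same route as the paper's: both start from \eqref{eq:Lnwitherror}, use $(2\alpha-1)f_k(\alpha)>1$ to bound the relative error by $1.5/\alpha^{n-1}$, expand $(1+\delta_n)^x-1$ via the binomial theorem with $\binom{x}{j}\le x^j/j!$, and arrive at the same $ye^y$ bound (the paper factors out the leading term of the series rather than invoking $e^y-1\le ye^y$, but the two manoeuvres are equivalent). One minor arithmetic slip: from \eqref{eq2.3} one gets $2\alpha-1>3-2^{2-k}$, not $3-2^{1-k}$ (indeed the latter fails for $k=2$), but $3-2^{2-k}\ge 2$ for $k\ge 2$ so your conclusion $(2\alpha-1)f_k(\alpha)>1$ and the rest of the argument are unaffected.
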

	\begin{proof}
		By the Binet formula in \eqref{eq:Lnwitherror} and inequalities \eqref{eq2.5}, it follows that
		\[
		\left(L_n^{(k)}\right)^x= f_k(\alpha)^x (2\alpha-1)^x\alpha^{(n-1)x} \left( 1 + \dfrac{e_k(n)}{f_k(\alpha)(2\alpha-1) \alpha^{n-1}} \right)^x.
		\]
		So, if we define
		\[
		\eta_n := \left( 1 + \dfrac{e_k(n)}{f_k(\alpha)(2\alpha-1) \alpha^{n-1}} \right)^x - 1 = \sum_{j=1}^x \binom{x}{j} \left( \frac{e_k(n)}{f_k(\alpha)(2\alpha-1) \alpha^{n-1}} \right)^j;
		\]
		we get
		\[
		|\eta_n| < \sum_{j=1}^x  \dfrac{(1.5x/\alpha^{n-1})^j}{j!} < \dfrac{1.5x}{\alpha^{n-1}} \sum_{j=1}^x \dfrac{(1.5x/\alpha^{n-1})^{j-1}}{(j-1)!} \le \dfrac{1.5xe^{1.5x/\alpha^{n-1}}}{\alpha^{n-1}},
		\]
		where we have used the fact $ |e_k(n)| < 1.5$ and $ (2\alpha-1)f_k(\alpha)>1$.
	\end{proof}
	
	Lastly here, we prove the following.
	\begin{lemma}\label{Ln:x}
		Let $x \geq 1$, $k \geq 2$, $i \in \{-1,0, 1\}$, $n + i \geq k + 2$ and $\max\{n + i, 16x\} < 2^{ck}$ for some $c \in (0, 0.25)$. Then, the estimate
		\[
		\left(L_{n+i}^{(k)}\right)^x = 3^x\cdot 2^{(n+i-2)x} \left(1 + \xi_{n+i} \right)
		\]
		holds with
		\[
		|\xi_{n+i}| <\dfrac{2}{2^{(1-2c)k}}.
		\]
	\end{lemma}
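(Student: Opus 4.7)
The idea is to combine Fay's estimate (Lemma \ref{fay}) with a binomial expansion argument of the same flavor as in the proof of Lemma \ref{lem:Lnx}. The hypotheses $n+i\ge k+2$ and $\max\{n+i,16x\}<2^{ck}$ are exactly what is needed to put us inside the regime where Lemma \ref{fay} applies and where the $x$-th power of the resulting error term can be controlled.

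\textbf{Step 1: Invoke Fay's lemma.} Since $n+i\ge k+2\ge 4>2$ and $n+i<2^{ck}$, and since $c\in(0,0.25)$ satisfies $c\le 0.693$, Lemma \ref{fay} yields
\[
L_{n+i}^{(k)}=3\cdot 2^{n+i-2}\bigl(1+\zeta'_{n+i}\bigr),\qquad |\zeta'_{n+i}|<\dfrac{4}{2^{(1-c)k}}.
\]

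\textbf{Step 2: Raise to the $x$-th power and expand.} Raising the above identity to the $x$-th power gives
\[
\left(L_{n+i}^{(k)}\right)^x=3^x\cdot 2^{(n+i-2)x}\bigl(1+\zeta'_{n+i}\bigr)^x,
\]
so setting $\xi_{n+i}:=(1+\zeta'_{n+i})^x-1$, I expand by the binomial theorem and estimate crudely (using $\binom{x}{j}\le x^j/j!$) exactly as in the proof of Lemma \ref{lem:Lnx}:
\[
|\xi_{n+i}|\le \sum_{j=1}^{x}\binom{x}{j}|\zeta'_{n+i}|^j\le \sum_{j=1}^{x}\dfrac{(x|\zeta'_{n+i}|)^j}{j!}\le x|\zeta'_{n+i}|\, e^{x|\zeta'_{n+i}|}.
\]

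\textbf{Step 3: Use $16x<2^{ck}$ to absorb the $x$.} By Step 1, $x|\zeta'_{n+i}|<4x/2^{(1-c)k}$, and the hypothesis $16x<2^{ck}$ converts this into
\[
x|\zeta'_{n+i}|<\dfrac{4x}{2^{(1-c)k}}<\dfrac{2^{ck}/4}{2^{(1-c)k}}=\dfrac{1}{4\cdot 2^{(1-2c)k}}.
\]
Since $c<0.25$ and $k\ge 2$ give $2^{(1-2c)k}>2$, the quantity $x|\zeta'_{n+i}|$ is bounded by $1/8$, hence $e^{x|\zeta'_{n+i}|}<e^{1/8}<8/7$. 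Plugging back,
\[
|\xi_{n+i}|<\dfrac{e^{1/8}}{4\cdot 2^{(1-2c)k}}<\dfrac{2}{2^{(1-2c)k}},
\]
which is the desired bound.

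\textbf{Main obstacle.} There is no genuine difficulty; the proof is essentially mechanical once Lemma \ref{fay} is in hand. The only point that requires minimal care is the bookkeeping in Step 3: one must convert the two seemingly unrelated hypotheses $n+i<2^{ck}$ (needed for Fay) and $16x<2^{ck}$ (needed to tame the $x$ coming from raising to the power) into a single factor $2^{(1-2c)k}$ in the denominator, with a constant no worse than $2$. The factor $16$ in the hypothesis is precisely what makes the final constant clean.
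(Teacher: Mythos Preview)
Your proof is correct and follows the same overall strategy as the paper: invoke Lemma \ref{fay}, raise to the $x$-th power, and use $16x<2^{ck}$ to absorb the factor $x$ into the denominator. The only difference is cosmetic: where you control $(1+\zeta'_{n+i})^x-1$ via the binomial expansion and the bound $e^{y}-1\le y e^{y}$ (mirroring the proof of Lemma \ref{lem:Lnx}), the paper instead writes $(1+\zeta'_{n+i})^x=\exp(x\log(1+\zeta'_{n+i}))$ and applies the elementary inequalities $|\log(1+y)|<2|y|$ and $|e^{y}-1|<2|y|$ for $|y|<1/2$, arriving at $|\xi_{n+i}|<4x|\zeta'_{n+i}|<16x/2^{(1-c)k}<1/2^{(1-2c)k}$. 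Both routes are equally short; yours in fact gives a slightly sharper constant.
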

	\begin{proof}
		By Lemma \ref{fay} with $c \in (0, 0.25)$, we have
		\[
		L_{n+i}^{(k)} = 3 \cdot 2^{n+i-2} \left( 1 + \zeta'_{n+i} \right), \quad \text{with} \quad |\zeta'_{n+i}| < \dfrac{4}{2^{(1-c)k}}.
		\]
		Hence,
		\begin{eqnarray*}
			\left(L_{n+i}^{(k)}\right)^x  & = &  3^x\cdot 2^{(n+i-2)x} \left(1 +\zeta_{n+i}' \right)^x\\
			& = & 3^x 2^{(n+i-2)x} \exp(x\log(1+\zeta_{n+i}'))\\
			& = & 3^x 2^{(n+i-2)x} \exp(\eta_{n+i}')\qquad |\eta_{n+i}'|<2x|\zeta_{n+i}'|\\
			& = & 3^x2^{(n+i-2)x}(1+\xi_{n+i}')\qquad |\xi_{n+i}'|<2|\eta_{n+i}'|<4x|\zeta_{n+i}'|<\frac{16x}{2^{(1-c)k}}<\frac{1}{2^{(1-2c)k}}\left(<\frac{1}{2}\right)
		\end{eqnarray*}
		The above calculations are justified since both inequalities $|\log(1+y)|<2|y|$ and $|\exp y-1|<2|y|$ hold for $y\in (-1/2,1/2)$, and the fact that 
		all the intermediate quantities $\zeta_{n+i}',~\eta_{n+i}',\xi_{n+i'}$ are in the range $(-1/2,1/2)$ follows from the very last inequality above. 
	\end{proof}
	
	\subsection{Linear forms in logarithms}
	We use Baker--type lower bounds for nonzero linear forms in logarithms of algebraic numbers. There are many such bounds mentioned in the literature but we use one of Matveev from \cite{MAT}. Before we can formulate such inequalities, we need the notion of height of an algebraic number recalled below.  
	
	\begin{definition}\label{def2.1}
		Let $ \gamma $ be an algebraic number of degree $ d $ with minimal primitive polynomial over the integers $$ a_{0}x^{d}+a_{1}x^{d-1}+\cdots+a_{d}=a_{0}\prod_{i=1}^{d}(x-\gamma^{(i)}), $$ where the leading coefficient $ a_{0} $ is positive. Then, the logarithmic height of $ \gamma$ is given by $$ h(\gamma):= \dfrac{1}{d}\Big(\log a_{0}+\sum_{i=1}^{d}\log \max\{|\gamma^{(i)}|,1\} \Big). $$
	\end{definition}
	In particular, if $ \gamma$ is a rational number represented as $\gamma=p/q$ with coprime integers $p$ and $ q\ge 1$, then $ h(\gamma ) = \log \max\{|p|, q\} $. 
	The following properties of the logarithmic height function $ h(\cdot) $ will be used in the rest of the paper without further reference:
	\begin{equation}\nonumber
		\begin{aligned}
			h(\gamma_{1}\pm\gamma_{2}) &\leq h(\gamma_{1})+h(\gamma_{2})+\log 2;\\
			h(\gamma_{1}\gamma_{2}^{\pm 1} ) &\leq h(\gamma_{1})+h(\gamma_{2});\\
			h(\gamma^{s}) &= |s|h(\gamma)  \quad {\text{\rm valid for}}\quad s\in \mathbb{Z}.
		\end{aligned}
	\end{equation}
	In Section 3, equation (12) of \cite{Brl} these properties were used to show the following inequality:
	\begin{align}\label{eq2.9}
		h\left(f_k(\alpha)\right)<3\log k, ~~\text{for all}~~k\ge 2.
	\end{align}
	
	A linear form in logarithms is an expression
	\begin{equation}
		\label{eq:Lambda}
		\Lambda:=b_1\log \gamma_1+\cdots+b_t\log \gamma_t,
	\end{equation}
	where $\gamma_1,\ldots,\gamma_t$ are positive real  algebraic numbers and $b_1,\ldots,b_t$ are integers. We assume, $\Lambda\ne 0$. We need lower bounds 
	for $|\Lambda|$. We write ${\mathbb K}:={\mathbb Q}(\gamma_1,\ldots,\gamma_t)$ and $D$ for the degree of ${\mathbb K}$ over ${\mathbb Q}$.
	We give a direct consequence of Matveev's inequality from \cite{MAT}. That is, we quote it in a form which we shall use. 
	
	\begin{theorem}[Matveev, \cite{MAT}]
		\label{thm:Mat} 
		Put $\Gamma:=\gamma_1^{b_1}\cdots \gamma_t^{b_t}-1=e^{\Lambda}-1$. Then 
		$$
		\log |\Gamma|>-1.4\cdot 30^{t+3}\cdot t^{4.5} \cdot D^2 (1+\log D)(1+\log B)A_1\cdots A_t,
		$$
		where $B\ge \max\{|b_1|,\ldots,|b_t|\}$ and $A_i\ge \max\{Dh(\gamma_i),|\log \gamma_i|,0.16\}$ for $i=1,\ldots,t$.
	\end{theorem}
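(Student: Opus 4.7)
\textbf{Proposal for a proof of Theorem \ref{thm:Mat}.} The plan is to follow the argument of \cite{MAT}, which refines the classical Baker--Feldman auxiliary function technique for linear forms in logarithms. We argue by contradiction: assume $|\Gamma|<\exp(-E)$ with $E$ slightly larger than the right-hand side of the claimed inequality. We may assume $|\Gamma|<1/2$ (otherwise the bound is trivial), so the relation $\Gamma=e^{\Lambda}-1$ gives $|\Lambda|\le 2|\Gamma|$, i.e.\ an extremely small non-zero linear form $\Lambda=b_1\log\gamma_1+\cdots+b_t\log\gamma_t$. The aim is to leverage this smallness to force an auxiliary exponential polynomial in the $\gamma_i$ to vanish at many more points than its arithmetic complexity permits, ultimately contradicting a zero--multiplicity theorem.

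The first ingredient is a \emph{Kummer descent}: replace each $\gamma_i$ by an $N_i$-th root $\gamma_i^{1/N_i}$ inside a suitable tower of Kummer extensions of $\mathbb{K}$, with the $N_i$ chosen so that the heights $h(\gamma_i^{1/N_i})=h(\gamma_i)/N_i$ shrink as much as possible while the degree of the enlarged field remains under control. This descent is Matveev's arithmetic innovation over Baker--W\"ustholz and is what produces the clean constant $30^{t+3}$ in the final bound; either the descent goes through with the desired parameters, or one extracts an algebraic relation reducing the number of logarithms and proceeds by induction on $t$. Having fixed this set-up, choose integers $L_0,L_1,\ldots,L_t,T_0,S$ and invoke Siegel's lemma to produce a non-zero exponential polynomial
\[
\Phi(z)=\sum_{\lambda_0=0}^{L_0}\cdots\sum_{\lambda_t=0}^{L_t} p(\lambda_0,\ldots,\lambda_t)\,z^{\lambda_0}\exp\!\left(z\sum_{i=1}^t \lambda_i\log\gamma_i\right),
\]
with algebraic integer coefficients $p(\vec\lambda)\in\mathcal{O}_{\mathbb{K}}$ of controlled size, such that $\Phi$ and all its derivatives of order $<T_0$ vanish at $z=0,1,\ldots,S-1$. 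Siegel's lemma delivers such coefficients provided the number of linear conditions $ST_0$ is comfortably smaller than the number of unknowns $\prod_i(L_i+1)$.

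The heart of the proof is the \emph{extrapolation}: using $|\Lambda|<e^{-E}$, a Schwarz lemma estimate on a disk of radius $\asymp S$ shows that $\Phi(s)$, which is a priori a linear combination of products $\gamma_1^{s\lambda_1}\cdots\gamma_t^{s\lambda_t}$ twisted by the tiny correction $e^{s\Lambda}-1$, lies below any Liouville lower bound for a non-zero element of $\mathbb{K}$ of the relevant height. Hence $\Phi$ must vanish at many \emph{new} integer points $s$. Iterating this a bounded number of times, $\Phi$ accumulates (with multiplicities) more zeros than its algebraic complexity allows, and Philippon's multiplicity estimate on the commutative algebraic group $\mathbb{G}_a\times\mathbb{G}_m^t$ forces a non-trivial algebraic relation among $\gamma_1,\ldots,\gamma_t$. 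This relation contradicts either the Kummer tower constructed above or, in the base case of the induction on $t$, the assumption $\Lambda\ne 0$ itself; hence the starting bound $|\Gamma|<\exp(-E)$ is impossible, and the theorem follows.

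The \emph{main obstacle} is the simultaneous parameter optimization that runs through the whole argument: the tuple $(N_i;L_0,\ldots,L_t;T_0,S)$ must be chosen so that Siegel's lemma applies, every extrapolation round genuinely gains zeros, the Kummer tower stays of controlled degree, and Philippon's zero estimate fires at exactly the correct stage. Extracting the explicit numerical constants $1.4$, $30^{t+3}$, and $t^{4.5}$, and in particular replacing any positive power of $B$ by the factor $(1+\log B)$ in the final bound, is a delicate multi-page calculation carried out in \cite{MAT}; the proposal here is to import that bookkeeping essentially verbatim rather than attempt an independent derivation, and to refer the reader to \cite{MAT} for the complete details.
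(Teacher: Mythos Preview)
Your proposal is a reasonable high-level sketch of how Matveev's theorem is proved in \cite{MAT}, but the paper does not prove this statement at all. Theorem~\ref{thm:Mat} is quoted directly from \cite{MAT} as a ready-made tool; the paper introduces it with the words ``we give a direct consequence of Matveev's inequality from \cite{MAT}'' and then applies it as a black box throughout Section~\ref{Sec3}. There is no proof, no sketch, and no indication that one is intended.

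So the comparison you asked for is trivial: the paper's ``proof'' is a citation, while your proposal outlines the actual transcendence-theoretic argument (Kummer descent, auxiliary function via Siegel's lemma, extrapolation by Schwarz, Philippon's zero estimate, parameter optimization). Your outline is broadly faithful to the structure of \cite{MAT}, though you correctly flag that the explicit constants and the replacement of any power of $B$ by $1+\log B$ require the full bookkeeping of Matveev's paper and cannot be recovered from a sketch at this level. For the purposes of this paper, however, none of that is needed: simply cite \cite{MAT} and move on.
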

	
	In our application of Matveev's result (Theorem \ref{thm:Mat}), we need to ensure that the linear forms in logarithms are indeed nonzero. To ensure this, we shall need the following result given as Lemma 2.8 in \cite{GGL1}.

	\begin{lemma}[Lemma 2.8 in \cite{GGL1}]\label{lemGLm}
		Let \( N := N_{\mathbb{K}/\mathbb{Q}}, \) where \( \mathbb{K} = \mathbb{Q}(\alpha) \). Then
		\begin{enumerate}[(i)]
			\item For \( n, m \geq 1 \) and \( k \geq 2 \), \( |N(\alpha)| = 1 \).
			\item \( N(2\alpha - 1) = 2^{k+1} - 3 \) and \( N(f_k(\alpha)) = (k - 1)^2 / (2^{k+1}k^k - (k + 1)^{k+1}) \).
			\item For \( k \geq 2 \), \( N((2\alpha - 1)f_k(\alpha)) \le 1 \). This inequality is strict for $k\ge 3$ (and is an equality for $k=2$). 
		\end{enumerate}	
	\end{lemma}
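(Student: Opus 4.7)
All three parts reduce to evaluating the minimal polynomial $\Psi_k(x) = x^k - x^{k-1} - \cdots - x - 1 = \prod_{i=1}^{k}(x - \alpha_i)$ of $\alpha$ at a handful of rational arguments and then assembling each norm as a product over the conjugates. The key algebraic tool I would use throughout is the identity
$$
(x - 1)\Psi_k(x) = x^{k+1} - 2x^k + 1,
$$
obtained by direct expansion, which gives $\Psi_k$ in closed form away from $x = 1$. Since $\Psi_k$ is irreducible, for any polynomial or rational function $p$ without poles at the $\alpha_i$ we have $N(p(\alpha)) = \prod_{i=1}^{k} p(\alpha_i)$, and in particular $\prod_{i=1}^{k}(\alpha_i - c) = (-1)^k \Psi_k(c)$ for every $c \in \mathbb{Q}$.

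Part (i) is immediate: $\Psi_k(0) = -1$, so $N(\alpha) = (-1)^k \Psi_k(0) = (-1)^{k+1}$ and $|N(\alpha)| = 1$. For part (ii), I would first write
$$
N(2\alpha - 1) = 2^k \prod_{i=1}^{k}(\alpha_i - \tfrac{1}{2}) = (-2)^k \Psi_k(\tfrac{1}{2}),
$$
and plug $x = 1/2$ into the identity to find $\Psi_k(1/2) = -(2^{k+1}-3)/2^k$, which yields the stated formula (up to a sign $(-1)^{k+1}$ absorbed into the absolute-value convention implicitly used in part (iii)). For $N(f_k(\alpha))$, rewriting $f_k(x) = (x-1)/((k+1)x - 2k)$ lets me handle numerator and denominator separately: $\prod(\alpha_i - 1) = (-1)^k \Psi_k(1) = (-1)^{k+1}(k-1)$, while $\prod((k+1)\alpha_i - 2k) = (-(k+1))^k \Psi_k(2k/(k+1))$. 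Evaluating the identity at $x = 2k/(k+1)$ gives $\Psi_k(2k/(k+1)) = ((k+1)^{k+1} - 2^{k+1}k^k)/((k+1)^k(k-1))$, and dividing produces the claimed closed form for $N(f_k(\alpha))$.

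For part (iii), multiplying the two expressions from (ii) yields
$$
|N((2\alpha - 1) f_k(\alpha))| = \frac{(2^{k+1} - 3)(k - 1)^2}{2^{k+1} k^k - (k+1)^{k+1}}.
$$
Substituting $k = 2$ gives $5/(32 - 27) = 1$, confirming the equality case. For $k \geq 3$, the claim reduces to the concrete inequality
$$
2^{k+1}(k - 1)^2 + (k+1)^{k+1} < 2^{k+1} k^k,
$$
which I would verify for $k = 3, 4$ by hand and then extend to all $k \geq 3$ by showing that the ratio $2^{k+1}k^k/(k+1)^{k+1}$ is increasing in $k$ and already at least $27/16$ when $k = 3$, so that the excess of $2^{k+1}k^k$ over $(k+1)^{k+1}$ comfortably absorbs $2^{k+1}(k-1)^2$ once $k^k$ dominates $(k-1)^2$. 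The main obstacle is precisely this uniform treatment of part (iii): because $(k+1)^{k+1}/(2^{k+1}k^k)$ only decays as $k$ grows, a purely asymptotic estimate will not cover the small-$k$ range, and I expect to have to combine a short numerical base with a clean monotonicity or induction argument; the rest of the proof is essentially bookkeeping with the identity above and the basic factorisations of the norm.
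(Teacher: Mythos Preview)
Your proof plan is correct and essentially complete; the identity $(x-1)\Psi_k(x)=x^{k+1}-2x^k+1$ is exactly the right tool, and your evaluations at $x=0,\,1,\,\tfrac12,\,\tfrac{2k}{k+1}$ all check out, including the sign ambiguity you flagged in part~(ii) (indeed $N(2\alpha-1)=(-1)^{k+1}(2^{k+1}-3)$, so the lemma is implicitly stating absolute values, consistent with how part~(iii) is phrased and used). Your argument for part~(iii) is also fine: the slightly strengthened inequality $2^{k+1}(k-1)^2+(k+1)^{k+1}<2^{k+1}k^k$ holds for $k=3$ by direct computation, and the monotonicity of $2^{k+1}k^k/(k+1)^{k+1}=2\cdot\bigl(\tfrac{2k}{k+1}\bigr)^{k+1}/2$ together with $k^k\geq (k-1)^2$ carries it forward.

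As for the comparison: the paper does not supply a proof of this lemma at all. It is quoted verbatim as Lemma~2.8 of \cite{GGL1} and used as a black box, so there is no in-paper argument to set against yours. Your write-up would in fact serve as a self-contained substitute for the citation.
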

	At some point, we treat cases with $t=2$, that is, linear forms in two logarithms. Let $A_1,~A_2>1$ be real numbers such that 
	\begin{equation}
		\label{eq:Aih}
		\log A_i\ge \max\left\{h(\gamma_i),\frac{|\log \gamma_i|}{D},\frac{1}{D}\right\}\quad {\text{\rm for}}\quad i=1,2.
	\end{equation}
	Put
	$$
	b':=\frac{|b_1|}{D\log A_2}+\frac{|b_2|}{D\log A_1}.
	$$
	The following  result is Corollary 2 in \cite{LMN}. 
	\begin{theorem}[Laurent et al., \cite{LMN}]
		\label{thm:LMNh}
		In case $t=2$, we put 
		$$\Lambda:=b_1\log \gamma_1-b_2\log \gamma_2, $$ where $|\gamma_{1}|, |\gamma_{2}|\ge 1$ are two multiplicatively independent real algebraic numbers and $b_1, b_2$ are positive integers. Then, we have
		$$
		\log |\Lambda|\ge -24.34 D^4\left(\max\left\{\log b'+0.14,\frac{21}{D},\frac{1}{2}\right\}\right)^2\log A_1\log A_2.
		$$
	\end{theorem}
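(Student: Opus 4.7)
The plan is to establish the lower bound for $|\Lambda|=|b_1\log\gamma_1-b_2\log\gamma_2|$, under the hypothesis that $\gamma_1,\gamma_2$ are multiplicatively independent, by the interpolation determinant method of M. Laurent, arguing by contradiction. First I would assume that $\log|\Lambda|$ is \emph{smaller} than the claimed right hand side and aim to derive a contradiction between an analytic upper bound and an arithmetic (Liouville) lower bound on the absolute value of a carefully chosen algebraic quantity $\Delta$. The free parameters in the construction will be tuned at the end to sharpen the numerical constant $24.34$ and the inner threshold $\log b'+0.14$.

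\textbf{Construction and the two bounds.} Let $L,R,S$ be nonnegative integer parameters chosen so that the forthcoming matrix is square, and form
\[
M=\bigl((r_1 b_2+r_2 b_1)^{\ell}\,\gamma_1^{\sigma r_1 b_2}\,\gamma_2^{\sigma r_2 b_1}\bigr),
\]
with rows indexed by $(\ell,\sigma)$, $0\le\ell\le L$, $0\le\sigma\le S$, and columns indexed by $(r_1,r_2)$, $0\le r_1,r_2\le R$; set $\Delta=\det M$. For the analytic upper bound, realize $\Delta$ as the value at $z=1$ of the entire function
\[
F(z)=\det\!\Bigl((r_1 b_2+r_2 b_1)^{\ell}\,e^{z\sigma(r_1 b_2\log\gamma_1+r_2 b_1\log\gamma_2)}\Bigr).
\]
Since $b_2\log\gamma_1=\tfrac{b_1 b_2}{b_1}\log\gamma_1$ and $\Lambda=b_1\log\gamma_1-b_2\log\gamma_2$ is small by hypothesis, many columns of $M$ are near-proportional in a quantitative way, so a Schwarz lemma estimate on a disk of optimized radius $\rho\gg 1$ delivers a bound of the shape $\log|\Delta|\le -\kappa_1\,LRS\log|\Lambda|^{-1}+(\text{smaller terms in }L,R,S,b_1,b_2,h(\gamma_i))$. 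For the arithmetic lower bound, $\Delta$ lies in $\mathbb{K}=\mathbb{Q}(\gamma_1,\gamma_2)$ of degree at most $D$, so the height $h(\Delta)$ can be bounded via the Leibniz expansion and the standard height inequalities, giving $h(\Delta)\le\kappa_2\,\bigl(LRS(b_1\log A_2+b_2\log A_1)+\text{corrections}\bigr)$; then Liouville's inequality yields $\log|\Delta|\ge -D\,h(\Delta)$ \emph{provided} $\Delta\neq 0$.

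The hard parts are twofold. First, one must guarantee $\Delta\neq 0$: this is a zero estimate on the algebraic torus $\mathbb{G}_m^2$, and multiplicative independence of $\gamma_1,\gamma_2$ enters here essentially. In this two-variable setting one can bypass the general Philippon--Masser machinery in favour of an explicit generalized Vandermonde determinant lemma (a key contribution of Laurent), but the numerical constants are very sensitive to the precise form of this lemma. Second, combining the two estimates produces an inequality of the shape
\[
LRS\log|\Lambda|^{-1}\le \kappa_3\,LRS\,D^2\log A_1\log A_2\cdot\max\!\bigl\{\log b'+0.14,\tfrac{21}{D},\tfrac12\bigr\}^2+(\text{lower order}),
\]
and one must optimize $L,R,S$ as functions of $D,\log A_1,\log A_2,b_1,b_2$ to recover exactly the constant $24.34$ together with the three-way maximum. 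This parameter optimization, together with the careful bookkeeping of the safety terms $21/D$ and $1/2$ coming from requiring $A_i>1$ and the disk of analyticity to contain $z=1$, will be the main technical obstacle; the overall scaffolding of the interpolation-determinant argument is by now standard in the transcendence literature.
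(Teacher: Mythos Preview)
The paper does not prove this statement at all: Theorem~\ref{thm:LMNh} is quoted verbatim as Corollary~2 of Laurent--Mignotte--Nesterenko~\cite{LMN} and used as a black box, so there is no ``paper's own proof'' to compare against. Your proposal is therefore not a comparison target but rather an outline of how the original source~\cite{LMN} establishes the result.

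That said, your sketch of the interpolation-determinant method is broadly faithful to the architecture of~\cite{LMN}: the construction of the matrix $M$, the Schwarz-lemma upper bound for $\Delta=\det M$, the Liouville lower bound via heights, the zero estimate exploiting multiplicative independence, and the final parameter optimization are all the right ingredients. Two cautions if you actually intend to carry this out. First, the specific numerical constant $24.34$ and the threshold $\log b'+0.14$ arise from a rather delicate optimization in~\cite{LMN} that you have only gestured at; reproducing those exact numbers requires tracking all error terms precisely, not just the leading-order shape. Second, your displayed ``combined inequality'' has the wrong dependence: the right-hand side should scale like $D^4$, not $D^2$, since both the Liouville step and the height bookkeeping each contribute factors of $D$, and the $\max\{\cdots\}^2$ arises from the choice of $L$ rather than multiplicatively with $D^2$. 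As written, your final inequality would not yield the stated bound. For the purposes of the present paper, however, none of this matters: the correct move is simply to cite~\cite{LMN}.
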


	During the calculations, upper bounds on the variables are obtained which are too large, thus there is need to reduce them. To do so, we use some results from
	approximation lattices and the so--called LLL--reduction method from \cite{LLL}. We explain this in the following subsection.
	
	\subsection{Reduced Bases for Lattices and LLL--reduction methods}\label{sec2.3}
	Let $k$ be a positive integer. A subset $\mathcal{L}$ of the $k$--dimensional real vector space ${ \mathbb{R}^k}$ is called a lattice if there exists a basis $\{b_1, b_2, \ldots, b_k \}$ of $\mathbb{R}^k$ such that
	\begin{align*}
		\mathcal{L} = \sum_{i=1}^{k} \mathbb{Z} b_i = \left\{ \sum_{i=1}^{k} r_i b_i \mid r_i \in \mathbb{Z} \right\}.
	\end{align*}
	We say that $b_1, b_2, \ldots, b_k$ form a basis for $\mathcal{L}$, or that they span $\mathcal{L}$. We
	call $k$ the rank of $ \mathcal{L}$. The determinant $\text{det}(\mathcal{L})$, of $\mathcal{L}$ is defined by
	\begin{align*}
		\text{det}(\mathcal{L}) = | \det(b_1, b_2, \ldots, b_k) |,
	\end{align*}
	with the $b_i$'s being written as column vectors. This is a positive real number that does not depend on the choice of the basis (see \cite{Cas}, Section 1.2).
	
	Given linearly independent vectors $b_1, b_2, \ldots, b_k$ in $ \mathbb{R}^k$, we refer back to the Gram--Schmidt orthogonalization technique. This method allows us to inductively define vectors $b^*_i$ (with $1 \leq i \leq k$) and real coefficients $\mu_{i,j}$ (for $1 \leq j \leq i \leq k$). Specifically,
	\begin{align*}
		b^*_i &= b_i - \sum_{j=1}^{i-1} \mu_{i,j} b^*_j,~~~
		\mu_{i,j} = \dfrac{\langle b_i, b^*_j\rangle }{\langle b^*_j, b^*_j\rangle},
	\end{align*}
	where \( \langle \cdot , \cdot \rangle \)  denotes the ordinary inner product on \( \mathbb{R}^k \). Notice that \( b^*_i \) is the orthogonal projection of \( b_i \) on the orthogonal complement of the span of \( b_1, \ldots, b_{i-1} \), and that \( \mathbb{R}b_i \) is orthogonal to the span of \( b^*_1, \ldots, b^*_{i-1} \) for \( 1 \leq i \leq k \). It follows that \( b^*_1, b^*_2, \ldots, b^*_k \) is an orthogonal basis of \( \mathbb{R}^k \). 
	\begin{definition}
		The basis $b_1, b_2, \ldots, b_n$ for the lattice $\mathcal{L}$ is called reduced if
		\begin{align*}
			\| \mu_{i,j} \| &\leq \frac{1}{2}, \quad \text{for} \quad 1 \leq j < i \leq n,~~
			\text{and}\\
			\|b^*_{i}+\mu_{i,i-1} b^*_{i-1}\|^2 &\geq \frac{3}{4}\|b^*_{i-1}\|^2, \quad \text{for} \quad 1 < i \leq n,
		\end{align*}
		where $ \| \cdot \| $ denotes the ordinary Euclidean length. The constant $ {3}/{4}$ above is arbitrarily chosen, and may be replaced by any fixed real number $ y $ in the interval ${1}/{4} < y < 1$ (see \cite{LLL}, Section 1).
	\end{definition}
	Let $\mathcal{L}\subseteq\mathbb{R}^k$ be a $k-$dimensional lattice  with reduced basis $b_1,\ldots,b_k$ and denote by $B$ the matrix with columns $b_1,\ldots,b_k$. 
	We define
	\[
	l\left( \mathcal{L},y\right):= \left\{ \begin{array}{c}
		\min_{x\in \mathcal{L}}||x-y|| \quad  ;~~ y\not\in \mathcal{L}\\
		\min_{0\ne x\in \mathcal{L}}||x|| \quad  ;~~ y\in \mathcal{L}
	\end{array}
	\right.,
	\]
	where $||\cdot||$ denotes the Euclidean norm on $\mathbb{R}^k$. It is well known that, by applying the
	LLL--algorithm, it is possible to give in polynomial time a lower bound for $l\left( \mathcal{L},y\right)$, namely a positive constant $c_1$ such that $l\left(\mathcal{L},y\right)\ge c_1$ holds (see \cite{SMA}, Section V.4).
	\begin{lemma}\label{lem2.5}
		Let $y\in\mathbb{R}^k$ and $z=B^{-1}y$ with $z=(z_1,\ldots,z_k)^T$. Furthermore, 
		\begin{enumerate}[(i)]
			\item if $y\not \in \mathcal{L}$, let $i_0$ be the largest index such that $z_{i_0}\ne 0$ and put $\sigma:=\{z_{i_0}\}$, where $\{\cdot\}$ denotes the distance to the nearest integer.
			\item if $y\in \mathcal{L}$, put $\sigma:=1$.
		\end{enumerate}
		\noindent Finally, let 
		\[
		c_2:=\max\limits_{1\le j\le k}\left\{\dfrac{||b_1||^2}{||b_j^*||^2}\right\}.
		\]
		Then, 
		\[
		l\left( \mathcal{L},y\right)^2\ge c_2^{-1}\sigma^2||b_1||^2:=c_1^2.
		\]
	\end{lemma}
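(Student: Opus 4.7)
The idea is to pass to the Gram--Schmidt orthogonal basis $b_1^\ast,\ldots,b_k^\ast$ and then reduce the estimate to controlling a single orthogonal coefficient. Writing each $x\in\mathcal{L}$ as $x=\sum a_i b_i$ with $a_i\in\mathbb{Z}$ and substituting $b_i=b_i^\ast+\sum_{j<i}\mu_{i,j}b_j^\ast$, one gets
$$
x-y=\sum_{j=1}^{k}\lambda_j\,b_j^\ast,\qquad \lambda_j=(a_j-z_j)+\sum_{i>j}(a_i-z_i)\mu_{i,j},
$$
and hence, by orthogonality of the $b_j^\ast$,
$$
\|x-y\|^2=\sum_{j=1}^{k}\lambda_j^2\,\|b_j^\ast\|^2.
$$
The strategy will be to pick, for each admissible $x$, an index $s=s(x)$ with $|\lambda_s|\ge\sigma$; combined with the elementary estimate $\|b_s^\ast\|^2\ge c_2^{-1}\|b_1\|^2$ (immediate from the definition of $c_2$), this will give $\|x-y\|^2\ge c_2^{-1}\sigma^2\|b_1\|^2$, and taking the minimum over $x$ will yield the lemma.

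Case (ii), when $y\in\mathcal{L}$, is immediate: for any nonzero $x=\sum a_ib_i\in\mathcal{L}$ I take $s$ maximal with $a_s\ne0$; the sum in $\lambda_s$ over $i>s$ then vanishes and $\lambda_s=a_s$ is a nonzero integer, so $|\lambda_s|\ge 1=\sigma$. For case (i), with $y\notin\mathcal{L}$ and $i_0$ the largest index with $z_{i_0}\ne0$, I split on whether some $a_i\ne0$ for $i>i_0$. If yes, let $s$ be the largest such index; then for $i>s$ one has $a_i=0$ by maximality and $z_i=0$ since $i>i_0$, so again $\lambda_s=a_s\in\mathbb{Z}\setminus\{0\}$ and $|\lambda_s|\ge 1\ge\sigma$. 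If no, then $a_i-z_i=0$ for all $i>i_0$, which forces $\lambda_{i_0}=a_{i_0}-z_{i_0}$; since $a_{i_0}\in\mathbb{Z}$, this has absolute value at least the distance from $z_{i_0}$ to the nearest integer, namely $\sigma$.

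The only real subtlety is this last subcase of (i): the ``natural'' choice $j=i_0$ can a priori be spoiled by an adversarial choice of integers $a_{i_0+1},\ldots,a_k$, which could shrink $\lambda_{i_0}$ below $\sigma$. The case split above precisely captures the fact that any such adversarial shift is forced to produce a nonzero integer Gram--Schmidt coefficient at some higher index $s>i_0$, so a useful lower bound of size $\sigma$ is always available somewhere in the orthogonal expansion. Once $s=s(x)$ has been identified in each case, combining $|\lambda_s|\ge\sigma$ with orthogonality and $\|b_s^\ast\|^2\ge c_2^{-1}\|b_1\|^2$ closes the argument in both (i) and (ii) and yields $l(\mathcal{L},y)^2\ge c_2^{-1}\sigma^2\|b_1\|^2=c_1^2$ as claimed.
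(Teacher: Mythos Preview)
Your argument is correct and is the standard proof of this classical lattice estimate. Note that the paper does not actually prove this lemma; it is quoted without proof as a known result (it is essentially the lower bound used in the LLL--based reduction method, cf.\ Smart's book referenced as \cite{SMA}). One small notational point: in case (ii) you silently switch from the expansion of $x-y$ to that of $x$ alone (equivalently, set all $z_i=0$), since there $l(\mathcal{L},y)=\min_{0\ne x\in\mathcal{L}}\|x\|$; this is of course the right thing to do, but it would be cleaner to say so explicitly, as otherwise the claim ``$\lambda_s=a_s$'' does not follow from your displayed formula for $\lambda_j$.
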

	In our application, we are given real numbers $\eta_0,\eta_1,\ldots,\eta_k$ which are linearly independent over $\mathbb{Q}$ and two positive constants $c_3$ and $c_4$ such that 
	\begin{align}\label{2.9}
		|\eta_0+a_1\eta_1+\cdots +a_k \eta_k|\le c_3 \exp(-c_4 H),
	\end{align}
	where the integers $a_i$ are bounded as $|a_i|\le A_i$ with $A_i$ given upper bounds for $1\le i\le k$. We write $A_0:=\max\limits_{1\le i\le k}\{A_i\}$. The basic idea in such a situation, from \cite{Weg}, is to approximate the linear form \eqref{2.9} by an approximation lattice. So, we consider the lattice $\mathcal{L}$ generated by the columns of the matrix
	$$ \mathcal{A}=\begin{pmatrix}
		1 & 0 &\ldots& 0 & 0 \\
		0 & 1 &\ldots& 0 & 0 \\
		\vdots & \vdots &\vdots& \vdots & \vdots \\
		0 & 0 &\ldots& 1 & 0 \\
		\lfloor C\eta_1\rfloor & \lfloor C\eta_2\rfloor&\ldots & \lfloor C\eta_{k-1}\rfloor& \lfloor C\eta_{k} \rfloor
	\end{pmatrix} ,$$
	where $C$ is a large constant usually of the size of about $A_0^k$ . Let us assume that we have an LLL--reduced basis $b_1,\ldots, b_k$ of $\mathcal{L}$ and that we have a lower bound $l\left(\mathcal{L},y\right)\ge c_1$ with $y:=(0,0,\ldots,-\lfloor C\eta_0\rfloor)$. Note that $ c_1$ can be computed by using the results of Lemma \ref{lem2.5}. Then, with these notations the following result  is Lemma VI.1 in \cite{SMA}.
	\begin{lemma}[Lemma VI.1 in \cite{SMA}]\label{lem2.6}
		Let $S:=\displaystyle\sum_{i=1}^{k-1}A_i^2$ and $T:=\dfrac{1+\sum_{i=1}^{k}A_i}{2}$. If $c_1^2\ge T^2+S$, then inequality \eqref{2.9} implies that we either have $a_1=a_2=\cdots=a_{k-1}=0$ and $a_k=-\dfrac{\lfloor C\eta_0 \rfloor}{\lfloor C\eta_k \rfloor}$, or
		\[
		H\le \dfrac{1}{c_4}\left(\log(Cc_3)-\log\left(\sqrt{c_1^2-S}-T\right)\right).
		\]
	\end{lemma}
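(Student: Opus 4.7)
The plan is to translate the Diophantine inequality \eqref{2.9} into a short-vector problem in the lattice $\mathcal{L}$ and then to apply the lower bound $l(\mathcal{L},y)\ge c_1$ together with the hypothesis $c_1^{2}\ge T^{2}+S$.

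Given an integer tuple $(a_1,\ldots,a_k)$ satisfying \eqref{2.9}, I would form the lattice point $v:=\mathcal{A}\,(a_1,\ldots,a_k)^T\in\mathcal{L}$. A direct computation from the shape of $\mathcal{A}$ gives $v=(a_1,a_2,\ldots,a_{k-1},M)^T$ with $M:=\sum_{i=1}^{k}a_i\lfloor C\eta_i\rfloor$. Comparing with $y=(0,\ldots,0,-\lfloor C\eta_0\rfloor)^T$, the first $k-1$ coordinates of $v-y$ are $a_1,\ldots,a_{k-1}$ and the last is $M+\lfloor C\eta_0\rfloor$. The first $k-1$ entries contribute at most $\sum_{i=1}^{k-1}a_i^{2}\le S$ to $\|v-y\|^{2}$. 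For the last coordinate I rewrite
\[
M+\lfloor C\eta_0\rfloor \;=\; C\!\left(\eta_0+\sum_{i=1}^{k}a_i\eta_i\right)-\left(\{C\eta_0\}+\sum_{i=1}^{k}a_i\{C\eta_i\}\right),
\]
where $\{\cdot\}$ denotes distance to the nearest integer. Applying \eqref{2.9} to the first term and the trivial bounds $|\{\cdot\}|\le\tfrac12$, $|a_i|\le A_i$ to the second yields the upper estimate $|M+\lfloor C\eta_0\rfloor|\le Cc_3 e^{-c_4 H}+T$.

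Now I split into two cases. If $v=y$, the first $k-1$ coordinates immediately force $a_1=\cdots=a_{k-1}=0$, and the last coordinate reduces to $a_k\lfloor C\eta_k\rfloor=-\lfloor C\eta_0\rfloor$, which is exactly the trivial alternative listed in the lemma. Otherwise $v\ne y$; since $v\in\mathcal{L}$, the definition of $l(\mathcal{L},y)$ in both subcases $y\in\mathcal{L}$ and $y\notin\mathcal{L}$ gives $\|v-y\|\ge l(\mathcal{L},y)\ge c_1$. Squaring and combining with the coordinate bound yields $c_1^{2}\le S+\bigl(M+\lfloor C\eta_0\rfloor\bigr)^{2}$, so $|M+\lfloor C\eta_0\rfloor|\ge\sqrt{c_1^{2}-S}$. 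The hypothesis $c_1^{2}\ge T^{2}+S$ ensures $\sqrt{c_1^{2}-S}\ge T\ge 0$. Sandwiching the two bounds gives
\[
\sqrt{c_1^{2}-S}\;\le\;Cc_3\, e^{-c_4 H}+T,
\]
and solving for $H$ produces the stated inequality.

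There is no serious technical obstacle: the argument is essentially a clean translation of an analytic Diophantine bound into a geometric statement about $\mathcal{L}$. The one bookkeeping subtlety is the constant $T$ itself, which requires interpreting $\lfloor\cdot\rfloor$ in the construction of $\mathcal{A}$ as nearest-integer rounding so that $|\{C\eta_i\}|\le\tfrac12$; with the plain floor function one would have $|\{C\eta_i\}|<1$ and the same argument would yield the conclusion with $2T$ in place of $T$. Choosing which convention to quote is a matter of matching \cite{SMA}, not a genuine difficulty in the proof.
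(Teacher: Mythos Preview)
The paper does not give its own proof of this lemma; it is simply quoted as Lemma~VI.1 of \cite{SMA} and used as a black box. Your argument is the standard one underlying that result: map the integer tuple $(a_1,\ldots,a_k)$ to the lattice point $v=\mathcal{A}(a_1,\ldots,a_k)^T$, bound $\|v-y\|$ from above coordinatewise via \eqref{2.9} and the rounding errors, and from below by $l(\mathcal{L},y)\ge c_1$ whenever $v\ne y$. The dichotomy $v=y$ versus $v\ne y$ cleanly produces the two alternatives in the statement, and the algebra leading to the bound on $H$ is correct. Your closing remark about the rounding convention is also on point: the constant $T=(1+\sum A_i)/2$ presupposes nearest-integer rounding in the construction of $\mathcal{A}$, which is the convention in \cite{SMA}; with the plain floor one would get $2T$ in place of $T$, which is harmless for the applications in the paper.
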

	
	Finally, we present an analytic argument which is Lemma 7 in \cite{GL}.  
	\begin{lemma}[Lemma 7 in \cite{GL}]\label{Guz} If $ r \geq 1 $, $T > (4r^2)^r$ and $T >  \dfrac{p}{(\log p)^r}$, then $$p < 2^r T (\log T)^r.$$	
	\end{lemma}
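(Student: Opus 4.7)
The result is a purely analytic inequality, so I propose an argument by contradiction exploiting two iterated applications of the logarithm.

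Suppose, for contradiction, that $p\ge 2^rT(\log T)^r$. The hypothesis $T>p/(\log p)^r$ rewrites as $p<T(\log p)^r$, so combining these bounds on $p$ yields $(\log p)^r>2^r(\log T)^r$, that is, $\log p>2\log T$. Feeding this back into the logarithmic form $\log p<\log T+r\log\log p$ of the inequality $p<T(\log p)^r$, and using $\log T<(\log p)/2$, I obtain
\[
\log p<2r\log\log p.
\]

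I then refute this inequality using the remaining hypothesis $T>(4r^2)^r$, which rewrites as $\log T>2r\log(2r)$, so $\log p>2\log T>4r\log(2r)$. Setting $x:=\log p$, $x_0:=4r\log(2r)$ and $g(x):=x-2r\log x$, the inequality just derived reads $g(x)<0$. But $g'(x)=1-2r/x>0$ for $x>2r$, and $x_0>2r$ because $\log(2r)\ge\log 2>1/2$ for $r\ge 1$, so $g$ is strictly increasing on $[x_0,\infty)$; it therefore suffices to check that $g(x_0)>0$. Writing $u:=\log(2r)$ so that $2r=e^u$ and $x_0=2e^u u$, a short algebraic simplification gives $g(x_0)=e^u\bigl(u-\log(2u)\bigr)$, and positivity reduces to the elementary inequality $e^u>2u$, which holds for every $u\ge 0$ (the function $e^u-2u$ attains its minimum $2(1-\log 2)>0$ at $u=\log 2$). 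This contradicts $g(\log p)<0$, completing the argument.

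There is no real obstacle here: the proof is self-contained real analysis and uses none of the arithmetic of $k$--Lucas numbers or of linear forms in logarithms developed earlier in Section~2. The only delicate point is bookkeeping with constants, and in that respect the threshold $T>(4r^2)^r$ is visibly calibrated precisely so that, after two logarithms, the derived inequality $\log p<2r\log\log p$ can be refuted by the convexity estimate $e^u>2u$. The lemma is then a ready-made device for converting an implicit bound of the shape $p<T(\log p)^r$ into an explicit upper bound on the integer $p$.
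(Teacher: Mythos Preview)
Your proof is correct. The paper does not give its own proof of this lemma at all: it is quoted verbatim as Lemma~7 of G{\'u}zman--Sanchez and Luca \cite{GL} and used as a black box. Your contradiction argument---bootstrapping $p<T(\log p)^r$ through two logarithms to reach $\log p<2r\log\log p$, and then refuting that via the monotonicity of $g(x)=x-2r\log x$ past $x_0=4r\log(2r)$ together with the elementary bound $e^u>2u$---is a clean, self-contained justification of the quoted statement and is in the same spirit as the original source. No gaps; the constant $T>(4r^2)^r$ is indeed exactly what is needed to push $\log p$ past the turning point of $g$.
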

	SageMath 9.5 is used to perform all computations in this work.

	\section{Proof of Theorem \ref{thm1.1}.}\label{Sec3}
	In this section, we prove Theorem \ref{thm1.1}. To do this, we first find some trivial solutions.
	\subsection{Trivial solutions}
	Here, we study equation \eqref{eq:main} in the following trivial scenarios.
	\begin{enumerate}[(a)]
		\item If $x=0$, then (assuming $0^0:=1$ when $n=0$ and  $k\ge 3$) \eqref{eq:main} becomes $L_m^{(k)}=1$, for which $m=1$. We therefore get $(n,m,k,x)=(n,1,k,0)$ with $n\ge 1$ for all $k\ge 2$.	
		\item If $x=1$, then the Diophantine equation \eqref{eq:main} becomes 
		\begin{align}\label{x1}
			L_{n+1}^{(k)}+L_{n}^{(k)}-L_{n-1}^{(k)}=L_m^{(k)}.
		\end{align}
		Moreover, the inequalities in \eqref{m_b} tells us that for $x=1$, we have $n-3 <m<n+6$, for all $k\ge 2$. This implies that 
		$   m\in\{n-2, n-1,n,n+1,\ldots,n+5\}$. A straightforward verification shows that the only solutions to \eqref{x1} with these options are $(n,m,k,x)=(0,2,k,1)$ with $k\ge 3$, 
		$(n,m,k,x)=(1,0,k,1)$ for all $k\ge 2$ and also the sporadic solution $(n,m,k,x)=(0,3,2,1)$.
	\end{enumerate}
	Assume from now on that $x\ge 2$.  Suppose next that $m\le 2$. Since $m\ge nx-2$, we get $nx\le 4$ and since $x\ge 2$, we get $n\in \{0,1,2\}$. Thus, $L_m^{(k)}\in \{2,1,3\}$, while 
	$$
	\left(L_{n+1}^{(k)}\right)^x+\left(L_n^{(k)}\right)^x-\left(L_{n-1}^{(k)}\right)^x\ge 1+2^x-1=2^x\ge 4>L_m^{(k)},
	$$
	so there are no solutions in this range. 
	
	For the remaining part of the proof, we assume $k\ge 2$, $x\ge 2$, $m\ge 3$. 
	
	\subsection{The case $k=2$ and $n=0,2$}
	
	Assume $k=2$. In the case $n=0$ we get the equation 
	$$
	1+2^x-(-1)^x=L_m.
	$$
	When $x$ is even, we get $L_m=2^x$ and it is known (for example, by Carmichael's primitive divisor theorem \cite{Car}) that the only powers of $2$ in the Lucas sequence are $m=0,~3$ for which $L_0=2,~L_3=2^2$ and the only one with even exponent $x$ is $L_3=2^2$.  We get the solution $(n,m,k,x)=(0,3,2,2)$. When $x$ is odd, we get $L_m=2^x+2$. When $x=1$, we get the solution $m=3$ which leads to $(n,m,k,x)=(0,3,2,1)$. When $x=2$, we get $L_m=6$, which is false since $6$ is not a member of the sequence of Lucas numbers. When $x\ge 3$, we get that $2\| L_m$ so $m$ is even (and multiple of $3$). In particular, $L_m=L_{m/2}^2\pm 2$. We thus get
	$$
	L_{m/2}^2\pm 2=2^x+2,\qquad {\text{\rm so}}\qquad L_{m/2}^2\in \{2^x,2^x+4\}.
	$$ 
	Since $x$ is odd, the equation $L_{m/2}^2=2^x$ is not possible. Thus, $L_{m/2}^2=2^x+4$ leading to $2\| L_{m/2}$ and $(L_{m/2}^2/2)^2=2^{x-2}+1$. This is a particular instance of the Catalan equation and its only solution is $x-2=3$ and $L_{m/2}/2=3$, leading to $L_{m/2}=6$, but this is wrong since $6$ is not a member of the sequence of Lucas numbers.  
	
	Assume $n=2$. In this case we get the equation
	$$
	4^x+3^x-1=L_m.
	$$
	The left--hand side is a multiple of $3$, so $m$ is even. Considering the values of $x$ modulo $4$, we get that the left--hand side is congruent to $1$ modulo $5$ (if $x\equiv 0,1\pmod 5$), to $4\pmod 5$ (if $x\equiv 2\pmod 4$) and to $0\pmod 5$ (if $x\equiv 3\pmod 4$). Since the Lucas sequence is periodic modulo $5$ with period $4$ and achieves the values 
	$2,1,3,4,2,1,3,4,\ldots$, we get that only the cases $L_m\equiv 1,4\pmod 5$ are possible and this implies $m\equiv 1,3\pmod 4$. This contradicts the fact that $m$ is even.

	\subsection{The case $n\le k$}\label{subsec3.2}
	\subsubsection{Bounding $x$ and $m$ in terms of $k$}
	We start by revisiting \eqref{eq2.2}. Assume $n\ge 3$. Then we rewrite \eqref{eq:main} as
	\begin{align*}
		L_m^{(k)}&=	\left(L_{n+1}^{(k)}\right)^x+\left(L_{n}^{(k)}\right)^x-\left(L_{n-1}^{(k)}\right)^x\\
		&=\left(3\cdot 2^{n-1}\right)^x+\left(3\cdot 2^{n-2}\right)^x-\left(3\cdot 2^{n-3}\right)^x.
	\end{align*}
	Assume $m\le k$. Then, since $m\ge 3$, we have $L_m^{(k)}=3\cdot 2^{m-2}$. If $n\ge 3$, then the equation above is
	\begin{align*}
		3\cdot2^{m-2}=\left(3\cdot 2^{n-1}\right)^x+\left(3\cdot 2^{n-2}\right)^x-\left(3\cdot 2^{n-3}\right)^x.
	\end{align*}
	The right--hand side above is divisible by $3^x$. So, $3^x$ must divide $3\cdot2^{m-2}$, which is false for $x\ge 2$.
	This was in case $n\ge 3$. We must consider the cases of small $n$ as well. If $n=0,1,2$, then since $k\ne 2$ when $n=0,2$, we have
	$$
	3\cdot 2^{m-2}=L_m^{(k)}=(L_{n+1}^{(k)})^x+(L_n^{(k)})^x-(L_{n-1}^{(k)})^x\in \{1+2^x,3^x+1-2^x,6^x+3^x-1\}
	$$
	(the first element above corresponds to $n=0$ and $k\ge 3$, the next element corresponds to $n=1$ and the third element corresponds to 
	$n=2$ and $k\ge 3$, respectively). We need to find out when the elements of the above set are of the form 
	$3\cdot 2^{m-2}$. The first is odd and the third is not a multiple of $3$, so they cannot be of the form  $3\cdot 2^{m-2}$. Finally, for the second element, we get the equation $3\cdot 2^{m-2}=3^x+1-2^x$. This has the solution $(x,m)=(2,3)$ and no other solution since 
	for $x\ge 3$, the right--hand side is at least $20$ and not a multiple of $8$, so it cannot be of the form $3\cdot 2^{m-2}$. Thus, we get the additional solution 
	$(n,m,k,x)=(1,3,k,2)$. Note that $k\ge 3$ since $3=m\le k$.  
	
	For the rest of this section, we assume $k\ge 2$, $x\ge 2$, $m\ge \max\{k+1,nx-2\}$. 
	
	To proceed, we prove the following result.
	\begin{lemma}\label{lem3.1}
		Let $n$, $m$, $k$, $x$ be integer solutions to Eq. \eqref{eq:main} with $k\ge 2$, $0\le n\le k$, $k\ge 2$ and $m\ge \max\{k+1,nx-2\}$, then	
		\begin{align*}
			x< 3.1\cdot 10^{14} k^5(\log k)^2 \log m.
		\end{align*}
	\end{lemma}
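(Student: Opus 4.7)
The plan is to apply Matveev's inequality (Theorem \ref{thm:Mat}) to a nonzero linear form in three logarithms obtained by comparing the Binet-type approximation \eqref{eq:Lnwitherror} of $L_m^{(k)}$ with the dominant term $(L_{n+1}^{(k)})^x$ on the left-hand side of \eqref{eq:main}. Writing $L_m^{(k)} = f_k(\alpha)(2\alpha-1)\alpha^{m-1} + e_k(m)$ with $|e_k(m)|<1.5$ and rearranging gives
\begin{align*}
f_k(\alpha)(2\alpha-1)\alpha^{m-1} - \bigl(L_{n+1}^{(k)}\bigr)^x = \bigl(L_{n-1}^{(k)}\bigr)^x - \bigl(L_{n}^{(k)}\bigr)^x - e_k(m).
\end{align*}
Dividing by $\bigl(L_{n+1}^{(k)}\bigr)^x$ and exploiting \eqref{eq2.2} together with $L_{k+1}^{(k)} = 3\cdot 2^{k-1} - 2$, the ratio $L_n^{(k)}/L_{n+1}^{(k)}$ equals $1/2$ for $2\le n\le k-1$ and stays bounded above by $3/5$ when $n=k$, so the right-hand side divided by $\bigl(L_{n+1}^{(k)}\bigr)^x$ is at most $c_0\cdot 2^{-x}$ for an absolute constant $c_0$. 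Setting
\[
\Lambda := (m-1)\log\alpha - x\log L_{n+1}^{(k)} + \log\bigl(f_k(\alpha)(2\alpha-1)\bigr),
\]
the estimate $|e^{\Lambda} - 1|<c_0\cdot 2^{-x}<1/2$ (valid once $x$ is moderately large) yields $|\Lambda|<2c_0\cdot 2^{-x}$. The corner case $n=0$ is handled along identical lines with $L_n^{(k)}=2$ playing the role of dominant term.

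The crucial arithmetic step is showing $\Lambda\ne 0$. If $\Lambda = 0$, then $\alpha^{m-1}(2\alpha-1)f_k(\alpha) = \bigl(L_{n+1}^{(k)}\bigr)^x$, a positive integer. Taking norms from $\mathbb{K} = \mathbb{Q}(\alpha)$ to $\mathbb{Q}$ and invoking Lemma \ref{lemGLm}, the absolute value of the norm of the left-hand side satisfies $|N(\alpha)|^{m-1}\cdot |N((2\alpha-1)f_k(\alpha))| \le 1$, whereas the norm of the right-hand side equals $\bigl(L_{n+1}^{(k)}\bigr)^{xk}\ge 3^{xk}$ since $L_{n+1}^{(k)}\ge 3$ for $n\ge 1$. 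This contradiction yields $\Lambda\ne 0$.

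We then apply Theorem \ref{thm:Mat} to $\Lambda$ with $t=3$, $D=k$, $(\gamma_1,\gamma_2,\gamma_3)=(\alpha,\,L_{n+1}^{(k)},\,f_k(\alpha)(2\alpha-1))$, and $(b_1,b_2,b_3)=(m-1,-x,1)$. The height estimates are routine: $A_1 = \log\alpha \le \log 2$ (since $h(\alpha) = (\log\alpha)/k$); $A_2 = k\log L_{n+1}^{(k)} \le k^2\log 2$ (since $L_{n+1}^{(k)} \le 3\cdot 2^{k-1}$); and $A_3 = O(k\log k)$ via \eqref{eq2.9} together with an elementary bound on $h(2\alpha-1)$. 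From \eqref{m_b} we have $B := \max\{m-1, x, 1\}\le m$, so $\log B \le \log m$. Matveev's inequality then delivers $\log|\Lambda| > -C\, k^5(\log k)^2(1+\log m)$ for an explicit constant $C$, and comparing with $\log|\Lambda| < -x\log 2 + O(1)$ and solving for $x$ produces the claimed bound $x<3.1\cdot 10^{14}\,k^5(\log k)^2\log m$.

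The main obstacle is the numerical bookkeeping required to obtain the explicit constant $3.1\cdot 10^{14}$: one needs a clean uniform estimate of $|e^\Lambda-1|$ across the sub-cases (the exact formulas for $n\le k-1$, the perturbation $L_{k+1}^{(k)}=3\cdot 2^{k-1}-2$ when $n=k$, and the corner $n\in\{0,1\}$), and one must carefully track the heights $A_1,A_2,A_3$ and the factor $k^2(1+\log k)(1+\log B)$ through Matveev's coefficient. The conceptual ingredients---the Binet formula, the norm argument based on Lemma \ref{lemGLm}, and Matveev's theorem---are already in the preliminaries, so beyond this bookkeeping the argument is mechanical.
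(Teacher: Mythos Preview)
Your approach is correct and follows the same overall strategy as the paper: approximate $L_m^{(k)}$ via \eqref{eq:Lnwitherror}, divide by the dominant term $(L_{n+1}^{(k)})^x$ to obtain a quantity $\Gamma=e^{\Lambda}-1$ that is exponentially small in $x$, rule out $\Lambda=0$ by the norm computation of Lemma~\ref{lemGLm}, and feed the result into Matveev's theorem. The difference lies only in how the linear form is packaged. The paper exploits $L_{n+1}^{(k)}=3\cdot 2^{n-1}$ (valid for $2\le n+1\le k$) and works with the four numbers $(2\alpha-1)f_k(\alpha),\alpha,3,2$, treating $n\in\{0,1,2\}$ separately with $\delta\in\{2,3,6\}$; you keep $L_{n+1}^{(k)}$ intact and use the three numbers $\alpha,\,L_{n+1}^{(k)},\,(2\alpha-1)f_k(\alpha)$. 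Your choice buys a smaller Matveev constant ($t=3$ versus $t=4$) and handles $n=k$ uniformly without relying on the exact factorisation, at the cost of a height $A_2\asymp k^2$ in place of the paper's $A_3A_4\asymp k^2$, so the final order $k^5(\log k)^2\log m$ is the same and the explicit constant comes out at least as good. The paper's factored form, on the other hand, is what is reused verbatim in the later $k>800$ argument and in the LLL reductions, so its extra work here is not wasted.

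Two minor bookkeeping slips to fix when you carry out the details: your claim that $L_n^{(k)}/L_{n+1}^{(k)}\le 3/5$ at $n=k$ fails for $k=2$ (there $L_2/L_3=3/4$), and for $n=1$ the relevant ratio is $L_0^{(k)}/L_2^{(k)}=2/3$, so the decay is $(2/3)^x$ rather than $2^{-x}$. Both only affect the base of the exponential and hence the constant in front; you already flag this kind of case-dependence as the main bookkeeping obstacle, so the proposal stands.
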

	\begin{proof}
		Assume first that $n\ge 3$. We go back to \eqref{eq:main} and rewrite it as
		\begin{align*}
			L_m^{(k)}&=\left(3\cdot 2^{n-1}\right)^x+\left(3\cdot 2^{n-2}\right)^x-\left(3\cdot 2^{n-3}\right)^x.
		\end{align*}	
		In case $n\in \{0,1,2\}$, then again using the fact that $k\ge 3$ when $n\in \{0,2\}$, the  right--hand side above is 
		\begin{equation}
			\label{eq:r}
			2^x+1,\quad 3^x+1-2^x, \quad 6^x+3^x-1.
		\end{equation}
		Now, using the Binet formula in \eqref{eq:Lnwitherror}, we have
		$$
		L_m^{(k)}-f_k(\alpha)(2\alpha-1)\alpha^{m-1}=e_k(m).
		$$
		Thus, assuming $n\ge 3$, we get
		\begin{align*}
			L_m^{(k)}-f_k(\alpha)(2\alpha-1)\alpha^{m-1}&=e_k(m)\\
			\left(3\cdot 2^{n-1}\right)^x+\left(3\cdot 2^{n-2}\right)^x-\left(3\cdot 2^{n-3}\right)^x-f_k(\alpha)(2\alpha-1)\alpha^{m-1}&=e_k(m)\\
			\left(3\cdot 2^{n-1}\right)^x-f_k(\alpha)(2\alpha-1)\alpha^{m-1}&=-\left(3\cdot 2^{n-2}\right)^x+\left(3\cdot 2^{n-3}\right)^x+e_k(m)\\
			1-3^{-x}2^{-(n-1)x}f_k(\alpha)(2\alpha-1)\alpha^{m-1}&=-\dfrac{1}{2^x}+\dfrac{1}{2^{2x}}+\dfrac{e_k(m)}{\left(3\cdot 2^{n-1}\right)^x}.
		\end{align*}	
		Taking absolute values and simplifying, we get
		\begin{align}
			\label{eq:gen}
			|\Gamma_1|:=	\left|3^{-x}2^{-(n-1)x}f_k(\alpha)(2\alpha-1)\alpha^{m-1}-1\right|<\dfrac{3}{2^x}.
		\end{align}
		This was for the case $n\ge 3$. For the cases when $n=0,1,2$, the amount $3\cdot 2^{n-1}$ above gets replaced by $2,3,6$, respectively and the right--hand sides get replaced by 
		$$
		\frac{1}{2^x}+\frac{|e_k(m)|}{2^x},\quad \left(\frac{2}{3}\right)^x+\frac{1}{3^x}+\frac{|e_k(m)|}{3^x},\quad \left(\frac{3}{6}\right)^x+\left(\frac{1}{6}\right)^x+\frac{|e_k(m)|}{6^x},
		$$
		respectively. All these expressions are bounded by $3(2/3)^x$. Thus, we get
		\begin{equation}
			\label{eq:gen1}
			|\Gamma_1|<3\left(\frac{2}{3}\right)^x,
		\end{equation}
		where now
		\begin{equation}
			\label{g1}
			\Gamma_1:=\delta^{-x} f_k(\alpha)(2\alpha-1)\alpha^{m-1}-1,\qquad \delta\in \{2,3,6\}.
		\end{equation}
		Note that $\Gamma_1\ne 0$ in all cases, otherwise we would have 
		$$
		f_k(\alpha)(2\alpha-1)\alpha^{m-1}\in \{(3\cdot 2^{n-1})^x,2^x,3^x,6^x\}.
		$$
		Applying norms in ${\mathbb K}={\mathbb Q}(\alpha)$ and using $|N(\alpha)|=1$, and item $(ii)$  and $(iii)$ of Lemma \ref{lemGLm}, the above equation becomes
		$$
		N\left(f_k(\alpha)\right)\cdot N(2\alpha-1)\in \{(3\cdot 2^{n-1})^{kx},2^{kx}, 3^{kx},6^{kx}\}.
		$$
		which implies that
		\begin{align*}
			1\ge \dfrac{(k - 1)^2 }{ 2^{k+1}k^k - (k + 1)^{k+1}}\cdot \left(2^{k+1}-3\right)=\left(2\right)^{kx}\ge \left(2\right)^{2\cdot 2}=16,
		\end{align*}
		a contradiction, for $x\ge 2$ and $k\ge 2$. Thus, $\Gamma_1\ne 0$.
		
		The algebraic number field containing the following $\gamma_i$'s is $\mathbb{K} := \mathbb{Q}(\alpha)$. When $n\ge 3$, we have $D = k$, $t :=4$,
		\begin{equation}\nonumber
			\begin{aligned}
				\gamma_{1}&:=(2\alpha-1)f_k(\alpha),\qquad \gamma_{2}:=\alpha, \qquad\gamma_{3}:=3,\qquad \gamma_{4}:=2,\\
				b_{1}&:=1,\qquad b_{2}:=m-1,\qquad b_{3}:=-x,\qquad b_{4}:=-(n-1)x.
			\end{aligned}
		\end{equation}
		We can take $A_3:=k \log 3$ and $A_4:=k \log 2$. Additionally, $h(\gamma_{2})=(\log \alpha)/k <0.7/k$, so we take $A_{2}:=0.7$. 
		For $A_1$, we first compute 
		\begin{align*}
			h(\gamma_{1}):=h\left((2\alpha-1)f_k(\alpha)\right)\le 
			h\left((2\alpha-1)\right)+h\left(f_k(\alpha)\right)<\log 3+3\log k<6\log k,
		\end{align*}
		for all $k\ge 2$. So, we can take $A_1:=6k\log k$. Next, $B \geq \max\{|b_i|:i=1,2,3, 4\}$, and by relation \eqref{m_b}, we can take $B:=m$. Now, by Theorem \ref{thm:Mat},
		\begin{align}\label{eq3.4}
			\log |\Gamma_1| &> -1.4\cdot 30^{7} \cdot 4^{4.5}\cdot k^2 (1+\log k)(1+\log m)\cdot 6k\log k \cdot (0.7) \cdot (k\log 3)\cdot (k\log 3)\nonumber\\
			&> -2.1\cdot 10^{14} k^5(\log k)^2 \log m.
		\end{align}
		For the cases $n=0,1,2$, we take $t:=3$ instead, keep the same $\gamma_1,~\gamma_2$ but $\gamma_3:= \delta$. Thus, we can take $A_3:=k\log 6$.  
		We then  get
		\begin{align}\label{eq3.41}
			\log |\Gamma_1| &> -1.4\cdot 30^{6} \cdot 4^{3.5}\cdot k^2 (1+\log k)(1+\log m)\cdot 6k\log k \cdot (0.7) \cdot (k\log 6))\nonumber\\
			&> -2.7\cdot 10^{12} k^4(\log k)^2 \log m.
		\end{align}
		When $n\ge 3$, comparing \eqref{eq:gen} and \eqref{eq3.4}, we get
		\begin{align*}
			x\log 2-\log 3&<2.1\cdot 10^{14} k^5(\log k)^2 \log m,
		\end{align*}
		which leads to $x<3.1\cdot 10^{14} k^5(\log k)^2 \log m$.	When $n=0,1,2$, applying instead \eqref{eq:gen1} we get 
		$$
		x\log(3/2)-\log 3<2.7\cdot 10^{12} k^4(\log k)^2 \log m,
		$$
		and this gives a smaller bound on $x$.
	\end{proof}

	Next, we prove the following.
	\begin{lemma}\label{lem3.2}
		Let $n$, $m$, $k$, $x$ be integer solutions to Eq. \eqref{eq:main} with $k\ge 2$, $0\le n\le k$, $k\ge 2$ and $m\ge \max\{k+1,5\}$, then	
		\begin{align*}
			m< 6.3\cdot 10^{32}k^{10}(\log k)^5 .
		\end{align*}
	\end{lemma}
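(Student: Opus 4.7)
The plan is to feed the bound on $x$ from Lemma \ref{lem3.1} into the upper estimate on $m$ in \eqref{m_b}, then invoke Lemma \ref{Guz} to disentangle $m$ from the implicit inequality that results.

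Since $0 \le n \le k$, the right inequality of \eqref{m_b} gives $m < (n+3)x + 3 \le (k+3)x + 3$. Substituting $x < 3.1 \cdot 10^{14}\, k^5 (\log k)^2 \log m$ from Lemma \ref{lem3.1} yields
\[
m < 3.1 \cdot 10^{14}\,(k+3)\,k^5 (\log k)^2 \log m + 3.
\]
Because we are in the range $m \ge 5$ we have $\log m > 1$, so the additive constant $3$ is absorbed by slightly enlarging the leading coefficient, producing an inequality of the form $m/\log m < T$ with $T$ a polynomial expression in $k$ and $\log k$ of order $k^{6}(\log k)^{2}$. After verifying that $T > 4$ (clear for every $k \ge 2$), I would apply Lemma \ref{Guz} with $r = 1$ to conclude
\[
m < 2\,T \log T.
\]
A routine expansion then follows: bound $(k+3) \le 3k$ for $k \ge 2$, use $\log\log k \le \log k$, and absorb all the absolute and logarithmic constants into the leading coefficient. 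This converts $2T\log T$ into an explicit bound of the shape $C\, k^{a}(\log k)^{b}$. Choosing the constants on the generous side — so as to dominate every intermediate rounding uniformly for all $k \ge 2$ — yields the stated inequality $m < 6.3 \cdot 10^{32}\, k^{10}(\log k)^{5}$.

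No new analytic ingredient is needed beyond Lemma \ref{lem3.1}; the only delicate step is the bookkeeping that unravels the implicit inequality $m < A\log m$ into an explicit polynomial-in-$k$ bound on $m$, while keeping the leading coefficient and the exponents of $k$ and $\log k$ large enough to absorb every intermediate simplification. Lemma \ref{Guz} is tailor-made for exactly this kind of disentangling, so once the setup above is in place the estimation should proceed without surprises.
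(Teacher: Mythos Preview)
Your approach is correct and in fact more direct than the paper's. The paper splits into cases: for $n\in\{0,1,2\}$ it does exactly what you propose (bound $m<5x+3$, substitute Lemma~\ref{lem3.1}, apply Lemma~\ref{Guz} with $r=1$), but for $n\ge 3$ it constructs a \emph{second} linear form $\Gamma_2$ involving the factor $2^x+1-2^{-x}$, applies Matveev's theorem again (using the bound on $x$ from Lemma~\ref{lem3.1} to control the height of this new algebraic number), and only then invokes Lemma~\ref{Guz}, this time with $r=2$. Your shortcut --- replacing $n+3$ by $k+3$ in \eqref{m_b}, legitimate since $n\le k$ --- bypasses the second Matveev application entirely and actually produces a sharper bound of order $k^{6}(\log k)^{3}$; the exponents $10$ and $5$ in the lemma's statement are an artefact of the paper's longer route, not something your argument will naturally reproduce. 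So your closing remark about ``choosing the constants on the generous side'' should be understood as simply weakening the bound: any estimate $m<Ck^{6}(\log k)^{3}$ that you obtain is comfortably dominated by $6.3\cdot 10^{32}k^{10}(\log k)^{5}$ for all $k\ge 2$, and the stated inequality follows.
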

	\begin{proof}
		If $n=0,1,2$, then 
		$$
		m< (n+3)x+3\le 5x+3<5\cdot 3.1\cdot 10^{14} k^5 (\log k)^2\log m+3<2\cdot 10^{15} k^5 (\log k)^2 \log m.
		$$
		Thus, 
		$$
		\frac{m}{\log m}<2\times 10^{15} k^5 (\log k)^2.
		$$
		Applying Lemma \ref{Guz} with $p:=m$, $r:=2$ and $T:=2\times 10^{15} k^5 (\log k)^2$ we get a better bound than the one from the statement of the lemma. 
		From now on, we assume $n\ge 3$. We go back to \eqref{eq:main} and rewrite it as
		\begin{align*}
			L_m^{(k)}&=\left(3\cdot 2^{n-1}\right)^x+\left(3\cdot 2^{n-2}\right)^x-\left(3\cdot 2^{n-3}\right)^x=\left(3\cdot 2^{n-2}\right)^x\left(2^x+1-2^{-x}\right).
		\end{align*}	
		As before, we use the Binet formula in \eqref{eq:Lnwitherror} and have
		\begin{align*}
			L_m^{(k)}-f_k(\alpha)(2\alpha-1)\alpha^{m-1}&=e_k(m)\\
			\left(3\cdot 2^{n-2}\right)^x\left(2^x+1-2^{-x}\right)-f_k(\alpha)(2\alpha-1)\alpha^{m-1}&=e_k(m)\\
			\left(3\cdot 2^{n-2}\right)^{x}\left(2^x+1-2^{-x}\right)\left(f_k(\alpha)\right)^{-1}(2\alpha-1)^{-1}\alpha^{-(m-1)}-1&=\dfrac{e_k(m)}{f_k(\alpha)(2\alpha-1)\alpha^{m-1}}.
		\end{align*}	
		Taking absolute values and simplifying, we get
		\begin{align}\label{g2}
			|\Gamma_2|&:=\left|\left(3\cdot 2^{n-2}\right)^{x}\left(2^x+1-2^{-x}\right)\left(f_k(\alpha)\right)^{-1}(2\alpha-1)^{-1}\alpha^{-(m-1)}-1\right|
			<\dfrac{1.5}{0.5\cdot1.5\alpha^{m-1}}
			=\dfrac{2}{\alpha^{m-1}}, 
		\end{align}
		where we have used the fact that $f_k(\alpha)>0.5$ and $2\alpha-1>1.5$, for all $k\ge 2$.
		Clearly, $\Gamma_2\ne 0$, otherwise we would have 
		\begin{align*}
			f_k(\alpha)= \dfrac{\left(3\cdot 2^{n-2}\right)^x\left(2^x+1-2^{-x}\right)}{(2\alpha-1)\alpha^{m-1}}.
		\end{align*}	
		Taking norms in ${\mathbb K}={\mathbb Q}(\alpha)$ as before, the above equation becomes
		\begin{align*}
			N\left(f_k(\alpha)\right)\cdot N(2\alpha-1)=\left(3\cdot 2^{n-2}\right)^{kx}\left(2^x+1-2^{-x}\right)^k.
		\end{align*}
		We have already shown before that the left--hand side above is $\le 1$. Therefore, 
		\begin{align*}
			1\ge 	N\left(f_k(\alpha)\right)\cdot N(2\alpha-1)=\left(3\cdot 2^{n-2}\right)^{kx}\left(2^x+1-2^{-x}\right)^k>500,
		\end{align*}
		a contradiction, for all $n\ge 3$, $x\ge 2$ and $k\ge 2$. Note that in the deduction above, we have used the fact that the function $\left(2^x+1-2^{-x}\right)$ is increasing and it is at least $ 4$ for all $x\ge 2$. Thus, $\Gamma_2\ne 0$. 
		
		The algebraic number field containing the following $\gamma_i$'s is $\mathbb{K} := \mathbb{Q}(\alpha)$, so $D = k$, $t :=4$,
		\begin{equation}\nonumber
			\begin{aligned}
				\gamma_{1}&:=3,\qquad \gamma_{2}:=2, \qquad\gamma_{3}:=\left(2^x+1-2^{-x}\right)/\left((2\alpha-1)f_k(\alpha)\right),\qquad \gamma_{4}:=\alpha,\\
				b_{1}&:=x\qquad b_{2}:=(n-2)x,\qquad b_{3}:=1,\qquad b_{4}:=-(m-1).
			\end{aligned}
		\end{equation}
		As before, $A_1:=k \log 3$, $A_2:=k \log 2$ and $A_{4}:=0.7$. To determine what $A_3$ could be, we first compute
		\begin{align*}
			h(\gamma_{3})&=h\left(\left(2^x+1-2^{-x}\right)/\left((2\alpha-1)f_k(\alpha)\right)\right)\le h\left(2^x+1-2^{-x}\right)+h(2\alpha-1)+h\left(f_k(\alpha)\right)\\
			&<2x\log 2+2\log 2+\log 3+3\log k\\
			&<2\left(3.1\cdot 10^{14} k^5(\log k)^2 \log m\right)\log 2+2\log 2+6\log k\\
			&<4.5\cdot 10^{14} k^5(\log k)^2 \log m,
		\end{align*}
		where we have used Lemma \ref{lem3.1}.
		Thus, we can take $A_3:=4.5\cdot 10^{14} k^6(\log k)^2 \log m$.
		
		Next, $B \geq \max\{|b_i|:i=1,2,3,4\}$, and by relation \eqref{m_b}, we can take $B:=m$. Now, by Theorem \ref{thm:Mat},
		\begin{align}\label{eq3.8}
			\log |\Gamma_2| &> -1.4\cdot 30^{7} \cdot 4^{4.5}\cdot k^2 (1+\log k)(1+\log m)\cdot k \log 3\cdot k \log 2\cdot  0.7\cdot 4.5\cdot 10^{14} k^6(\log k)^2 \log m\nonumber\\
			&> -6.8\cdot 10^{27} k^{10}(\log k)^3 (\log m)^2.
		\end{align}
		Comparing \eqref{g2} and \eqref{eq3.8}, we get
		\begin{align*}
			(m-1)\log\alpha-\log 2&<6.8\cdot 10^{27} k^{10}(\log k)^3 (\log m)^2,
		\end{align*}
		which leads to $m<1.5\cdot 10^{28} k^{10}(\log k)^3 (\log m)^2$. We now apply Lemma \ref{Guz} with  $p:=m$, $r:=2$, $T:=1.5\cdot 10^{28} k^{10}(\log k)^3 >(4r^2)^r=256$ for all $k\ge 2$. We get 
		\begin{align*}
			m&<2^2\cdot 1.5\cdot 10^{28} k^{10}(\log k)^3\left(\log 1.5\cdot 10^{28} k^{10}(\log k)^3\right)^2\\
			&=6\cdot 10^{28} k^{10}(\log k)^3 \left(\log (1.5\cdot 10^{28})+10\log k+3\log\log k\right)^2\\
			&<6\cdot 10^{28} k^{10}(\log k)^5  \left(\dfrac{65}{\log k}+10+\dfrac{3\log\log k}{\log k}\right)^2,
		\end{align*}
		so $m<6.3\cdot 10^{32}k^{10}(\log k)^5 $.	
	\end{proof}
	
	\subsubsection{The case $k>800$}\label{sub322}
	Here, we proceed by assuming for a moment that $k>800$. We get $m<6.3\cdot 10^{32}k^{10}(\log k)^5 <2^{0.28k}$, for all $k>800$. 
	Assume $n\ge 3$. We can rewrite \eqref{eq:main} as
	\begin{align*}
		L_m^{(k)}
		&=\left(3\cdot 2^{n-1}\right)^x+\left(3\cdot 2^{n-2}\right)^x-\left(3\cdot 2^{n-3}\right)^x.
	\end{align*}
	By part $(i)$ of Lemma \ref{fay} with $c:=0.28$, we have 
	\begin{align*}
		\left|\left(3\cdot 2^{n-1}\right)^x+\left(3\cdot 2^{n-2}\right)^x-\left(3\cdot 2^{n-3}\right)^x-3\cdot 2^{m-2}\right|<3\cdot 2^{m-2}\cdot \dfrac{4}{2^{0.72k}},
	\end{align*}
	which can be rewritten as 
	\begin{align}\label{eqc}
		\left|3^{x-1}(2^{2x}+2^x-1)-2^{m-2-(n-3)x}\right|<2^{m-2-(n-3)x} \cdot\frac{ 4}{2^{0.72 k}}.
	\end{align}
	Now, putting $y:=m-2-(n-3)x$ in \eqref{eqc} and dividing through  by $2^y$, we get
	\begin{align}\label{eq:0}
		\left|\frac{3^{x-1} (2^{2x}+2^x-1)}{2^y}-1\right|<\frac{4}{2^{0.72 k}}.
	\end{align}
	This was for $n\ge 3$. For $n=0,1,2$ (since $k\ge 3$ for $n=0,2$), the same argument leads to the following inequalities
	\begin{eqnarray*}
		\left|\frac{2^{x-(m-2)} (1+2^{-x})}{3}-1\right| & < & \frac{4}{2^{0.72k}},\qquad n=0;\\
		\left|\frac{3^{x-1}(1-(2^x-1)/3^x)}{2^{m-2}}-1\right| & < & \frac{4}{2^{0.72k}},\qquad n=1;\\
		\left|\frac{3^{x-1}(1+(3^x-1)/6^x)}{2^{m-2-x}}-1\right| & < & \frac{4}{2^{0.72k}}, \qquad n=2.
	\end{eqnarray*}
	The first one is false which shows that the case $n=0$ cannot hold for $k>800$. Indeed, either $x-(m-2)\le 1$ or $x-(m-2)\ge 2$. If $x-(m-2)\le 1$, then 
	$$
	\frac{2^{x-(m-2)} (1+2^{-x})}{3}\le \frac{2(1+1/2^2)}{3}=\frac{5}{6},\qquad {\text{\rm so}}\qquad \left|\frac{2^{x-(m-2)}(1+2^{-x})}{3}-1\right|\ge \frac{1}{6},
	$$
	while if $x-(m-2)\ge 2$, then 
	$$
	\frac{2^{x-(m-2)} (1+2^{-x})}{3}\ge \frac{2^2}{3}=\frac{4}{3},\qquad {\text{\rm so}}\qquad \left|\frac{2^{x-(m-2)}(1+2^{-x})}{3}-1\right|\ge \frac{1}{3}.
	$$
	Thus, in all cases, when $n=0$, we get $1/6<4/2^{0.72k}$, so $2^{0.72k}<24$, which is false for $k>800$. 
	
	Going back to the case $n\ge 3$, the left--hand side in \eqref{eq:0} is 
	\begin{align*}
		\left|e^{(x-1)\log 3+\log(2^{2x}+2^x+1)-y\log 2}-1\right|,
	\end{align*}
	and the right--hand side is $<1/2$. So, we get by a simple argument in calculus that
	\begin{align*}
		\left|(x-1)\log 3+\log(2^{2x}+2^x-1)-y\log 2\right|<\frac{8}{2^{0.72 k}}.
	\end{align*}
	This can be rewritten as 
	\begin{align*}
		\left|(x-1)\log 3+2x\log 2+\log\left(1+\frac{2^x-1}{2^{2x}}\right)-y\log 2\right|<\frac{8}{2^{0.72k}},
	\end{align*}
	or 
	\begin{equation}\label{eq:1}
		|(x-1)\log 3-z\log 2|<\frac{8}{2^{0.72k}}+\log\left(1+\frac{2^x-1}{2^{2x}}\right)<\frac{8}{2^{0.72k}}+\frac{2^x-1}{2^{2x}},
	\end{equation}
	with $z:=y-2x$. Since $x\ge 2$,  the right--hand side in \eqref{eq:1} is $<6/16$, so $z>0$.  The cases $n=1,~2$ lead, by similar arguments, to 
	\begin{equation}\label{eq:1prime}
		|(x-1)\log 3-z\log 2|<\frac{8}{2^{0.72k}}+\delta(x),\qquad \delta(x):=\begin{cases} 
			-\log\left(1-\dfrac{2^x-1}{3^x}\right), & \text{if} \ n=1, \\
			\log\left(1+\dfrac{3^x-1}{6^x}\right), & \text{if} \ n=2,\end{cases}
	\end{equation}
	and $z:=m-2$ when $n=1$ and $z:=m-2-x$, when $n=2$. In both cases above the right--hand sides are $<3/4$ so $z>0$. We get that 
	$$
	\delta(x)\le \frac{2(2^x-1)}{3^x}\quad (n=1)\qquad {\text{\rm and}}\qquad \delta(x)<\frac{3^x-1}{6^x}\qquad (n=2).
	$$
	Further, 
	$$
	z\log 2\le (x-1)\log 3+|(x-1)\log 3-z\log 2|<(x-1)\log 3+\frac{3}{4},
	$$
	so 
	\begin{equation}
		\label{eq:2}
		\frac{z}{\log 3}<\frac{x-1}{\log 2} +\frac{3}{4(\log 2\log 3)}<\frac{x-1}{\log 2}+1<2x.
	\end{equation}
	The left--hand side in \eqref{eq:1} (or \eqref{eq:1prime}) is of the form 
	\begin{align*}
		|b_1\log \gamma_1-b_2\log \gamma_2|,
	\end{align*}
	where $\gamma_1:=3,~\gamma_2:=2,~b_1:=x-1$ and $b_2:=z$. The numbers $\gamma_1$, $\gamma_2$ are rational so $D=1$. We take 
	\begin{align*}
		\log A_1\ge \max\left\{h(\gamma_{1}), |\log \gamma_1|, 1\right\},\quad \log A_2\ge \max\left\{h(\gamma_{2}), |\log \gamma_2|, 1\right\},
	\end{align*}
	so that $\log A_1=\log 3$ and $\log A_2=1$. 
	Further,
	\begin{align*}
		b'=\frac{b_1}{D\log A_2}+\frac{b_2}{D\log A_1}=x-1+\frac{z}{\log 3}<x-1+2x<3x,
	\end{align*}
	by \eqref{eq:2}. Since $\gamma_1,~\gamma_2$ are real, positive and multiplicatively independent, Theorem \ref{thm:LMNh} shows that 
	\begin{align}\label{eq:b}
		|(x-1)\log 3-z\log 2|>\exp\left(-24.34 \left(\max\left\{21, \log b'+0.14\right\}\right)^2\log 3\right).
	\end{align}
	Since 
	\begin{align*}
		\frac{8}{2^{0.72k}}+\delta(x)\le \frac{8}{2^{0.72k}}+\frac{2}{2^{x/2}}\le \frac{10}{2^{\min\{0.72k,rx\}}},
	\end{align*}
	where 
	\begin{equation}
		\label{eq:delta}
		\delta(x)\in\left\{\frac{2^x-1}{2^{2x}},\frac{2(2^x-1)}{3^x},\frac{3^x-1}{6^x}\right\},
	\end{equation}
	and we take $r=1$ except if $n=1$ when $r=1/2$, we compare \eqref{eq:1} and \eqref{eq:b} to conclude that
	\begin{equation}
		\label{eq:3}
		\min \{0.72 k,rx\} \log 2-\log 10<24.34 (\max\{21,\log(3x)+0.14\})^2\log 3.
	\end{equation}
	We proceed in two cases, when $\min \{0.72 k,x\}$ is $0.72k$ or $rx$.
	
	\medskip
	
	\begin{enumerate}[(i)]
		\item Assume first that $\min \{0.72 k,rx\}=0.72k$, then
		$$
		(0.72 \log 2)k<24.34( \max\{21, \log(3x)+0.14\})^2\log 3+\log 10.
		$$
		If $\max\{21, \log(3x)+0.14\}=21$, we get 
		$
		(0.72\log 2)k<24.34\cdot (21)^2\log 3+\log 10$, so $k<24000$.
		If $\max\{21, \log(3x)+0.14\}= \log(3x)+0.14$, we get 
		$$
		(0.72\log 2)k <24.34\log 3 (\log(3x)+0.14)^2+\log 10,
		$$
		so that
		$$
		k<54 (\log(3x)+0.14)^2+5<59(\log(3x)+0.14))^2,
		$$
		for all $x\ge 2$. By Lemma \ref{lem3.2}, we have 
		$$
		\log m<\log 6.3+32\log 10+10(\log k)+5\log\log k,
		$$
		and by Lemma \ref{lem3.1},
		$$
		x<3.1\cdot 10^{14} k^5 (\log k)^2 \log m.
		$$
		Thus, 
		\begin{align*}
			x  &<  3.1\cdot 10^{14} (59(\log(3x)+0.14)^2)^{5} (\log 59+2\log(\log(3x)+0.14))^2\\
			&\times   (\log 6.3+32\log 10+10(\log(59(\log(3x)+0.14)^2))+5\log(\log 59+2\log(\log(3x)+0.14))).
		\end{align*}
		We get $x<10^{47}$.
		
		\medskip 
		
		\item Next, if $\min \{0.72 k,rx\}=rx$, we then get that
		$$
		rx\log 2-\log 10<24.34(\max\{21,\log(3x)+0.10\})^2\log 3.
		$$
		If the maximum in the right above is in $21$, we then get $rx<18000$, while if the maximum is in $\log(3x)+0.14$, we then get
		$$
		rx\log 2<24.34\log 3 (\log(3x)+0.14)^2+\log 10\le 24.34\log 3(\log(3rx)+0.14+\log 2)^2+\log 10,
		$$
		so that $rx<4000$.
	\end{enumerate} 
	
	\medskip 
	
	So, in all cases $x<10^{47}$. We now return to \eqref{eq:1} which we write as 
	\begin{align*}
		\left|\frac{\log 3}{\log 2}-\frac{z}{x-1}\right|<\frac{8/2^{0.72k}+\delta(x)}{(x-1)\log 2}<\frac{12/2^{0.72k}+2\delta(x)}{x-1},
	\end{align*}
	where $\delta(x)$ is one of the three functions appearing in \eqref{eq:delta} (and $\delta(x)=(2^x-1)/2^{2x}$ if $n\ge 3$).  
	We generated the $97$th convergent $p_{97}/q_{97}=[a_0,a_1,\ldots,a_{97}]=[1,1,\ldots,3]$ of $\log 3/\log 2$ and we got that $q_{97}>10^{47}$ and that  $\max\{a_k: 0\le k\le 97\}=55$. By well--known properties of continued fractions, we get 
	\begin{align*}
		\frac{1}{57(x-1)^2}<\left|\frac{\log 3}{\log 2}-\frac{y-2x}{x-1}\right|<\frac{12/2^{0.72k}+2\delta(x)}{x-1}.
	\end{align*}
	Therefore, we have
	\begin{align*}
		\frac{1}{57(x-1)}<\frac{12}{2^{0.72k}}+2\delta(x).
	\end{align*}
	The left--hand side is at least $1/(57\cdot 10^{47})$, while $12/2^{0.72k}<10^{-170}$ since $k>800$. So, we get
	$$
	\frac{1}{58 (x-1)}<2\delta(x),
	$$
	which gives $x\le 10$ except if $n=1$ for which $x\le 20$. The cases $n=1,2$ lead to $m<(n+3)x+3<100<k$, a contradiction. Thus, $x\le 10$, $n\ge 3$, $z=y-2x$ and  
	\begin{equation}
		\label{eq:4}
		|(x-1)\log 3-(y-2x)\log 2|<\frac{8}{2^{0.72k}}+\frac{2^x-1}{2^{2x}}.
	\end{equation}
	We used a simple code to compute the minimum values of the expression 
	\[
	|(x-1)\log 3 - (y-2x)\log 2|,
	\]
	for integer values of \( y - 2x \). By assigning values to \( x \) ranging from 2 to 10, the code found the \( y \) that minimized the expression for each \( x \). The results showed that the left--hand side of \eqref{eq:4} was at least 
	\[
	0.28,~0.11,~0.16,~0.23,~0.05,~0.33,~0.06,~0.22,~0.18,
	\]
	for \( x = 2, 3, \ldots, 10 \), respectively. It turns out that for no $x\in \{2,3,\ldots,10\}$ inequality \eqref{eq:4} is satisfied. This finishes the argument that the case $k>800$ is not possible. 
	
	\subsubsection{The case $k\le800$}
	To proceed, we go back to \eqref{eq:gen} with the assumption that $3\le n \le k$ and write
	\begin{align*}
		|\Lambda_1|:=|\log(\Gamma_1+1)|=\left|\log f_k(\alpha)+\log(2\alpha-1)+(m-1)\log\alpha-x\log 3-(n-1)x\log 2\right|<\dfrac{4.5}{2^x},
	\end{align*}
	where $a_1 := 1$, $a_2 := 1$, $a_3 := m-1$, $a_4 := -x$, $a_5 := -(n-1)x$ are integers with 
	\[
	\max\{|a_i| : 1 \leq i \leq 5\}  < m<6.3 \cdot 10^{32} k^{10} (\log k)^5< 10^{66},
	\]
	where we used Lemma \ref{lem3.2}, for all $k\le 800$. For each $k \in [2, 800]$, we used the LLL--algorithm to compute a lower bound for the smallest nonzero number of the form $|\Lambda_1|$, with integer coefficients $a_i$ not exceeding $6.3 \cdot 10^{32} k^{10} (\log k)^5$ in absolute value. Specifically, we consider the approximation lattice
	$$ \mathcal{A}=\begin{pmatrix}
		1 & 0 & 0 & 0 & 0 \\
		0 & 1 & 0 & 0 & 0 \\
		0 & 0 & 1 & 0 & 0 \\
		0 & 0 & 0 & 1 & 0 \\
		\lfloor  C\log f_k(\alpha)\rfloor & \lfloor C\log (2\alpha-1) \rfloor& \lfloor C\log \alpha \rfloor& \lfloor C\log 3 \rfloor & \lfloor C\log2 \rfloor 
	\end{pmatrix} ,$$
	with $C:= 10^{331}$ and choose $y:=\left(0,0,0,0,0\right)$. Now, by Lemma \ref{lem2.5}, we get $$c_2=10^{-69}\qquad\text{and}\qquad l\left(\mathcal{L},y\right)\ge c_1:=3.13\cdot10^{68}.$$
	So, Lemma \ref{lem2.6} gives $S=5\cdot 10^{132}$ and $T=2.5\cdot 10^{66}$. Since $c_1^2\ge T^2+S$, then choosing $c_3:=4.5$ and $c_4:=\log2$, we get $x\le 872$.
	
	Next, if $n\in\{0,1,2\}$, we instead go back to \eqref{g1} together with \eqref{eq:gen1} and write
	\begin{align*}
		\left|\log f_k(\alpha)+\log(2\alpha-1)+(m-1)\log\alpha-x\log \delta\right|<\dfrac{4.5}{1.5^x},
	\end{align*}
	where $a_1 := 1$, $a_2 := 1$, $a_3 := m-1$, $a_4 := -x$ are integers with $\max\{|a_i| : 1 \leq i \leq 5\}  < 10^{66}$,
	as before. Again, for each $k \in [2, 800]$ and each $\delta\in\{2,3,6\}$, we used the LLL--algorithm to compute a lower bound for the smallest nonzero number of the form above and consider the approximation lattice
	$$ \mathcal{A}=\begin{pmatrix}
		1 & 0 & 0 & 0  \\
		0 & 1 & 0 & 0  \\
		0 & 0 & 1 & 0  \\
		\lfloor  C\log f_k(\alpha)\rfloor & \lfloor C\log (2\alpha-1) \rfloor& \lfloor C\log \alpha \rfloor& \lfloor C\log \delta \rfloor  
	\end{pmatrix} ,$$
	with $C:= 10^{265}$ and choose $y:=\left(0,0,0,0\right)$. So, Lemma \ref{lem2.5} gives $c_2=10^{-70}$ and $l\left(\mathcal{L},y\right)\ge c_1:=2.94\cdot10^{68}$.
	Moreover, Lemma \ref{lem2.6} gives $S=4\cdot 10^{132}$ and $T=2\cdot 10^{66}$. Since $c_1^2\ge T^2+S$, then choosing $c_3:=4.5$ and $c_4:=\log1.5$, we get $x\le 1119$. 
	
	Thus, $x\le 1119$ for all cases when $0\le n\le k\le 800$.
	
	Finally in this subsection,  we go back to \eqref{g2} and write $|\Lambda_2|:=|\log(\Gamma_2+1)|$ as
	\begin{align*}
		\left|\log f_k(\alpha)+\log(2\alpha-1)+(m-1)\log\alpha-x\log 3-(n-2)x\log 2-\log \left(2^x+1-2^{-x}\right)\right|<\dfrac{3}{\alpha^{m-1}}
	\end{align*}
	where $a_1 := 1$, $a_2 := 1$, $a_3 := m-1$, $a_4 := -x$, $a_5 := -(n-2)x$ and $a_6:=-1$ (in case $n\ge 3$) are integers with 
	\[
	\max\{|a_i| : 1 \leq i \leq 5\}  < m<6.3 \cdot 10^{32} k^{10} (\log k)^5< 10^{66},
	\]
	where we used Lemma \ref{lem3.2}, for all $k\le 800$. So, for each $k \in [2, 800]$ and each $x\in [1,1119]$ we used the LLL--algorithm to compute a lower bound for the smallest nonzero number of the form $|\Lambda_2|$, with integer coefficients $a_i$ not exceeding $6.3 \cdot 10^{32} k^{10} (\log k)^5$ in absolute value. That is, we instead consider the approximation lattice
	$$ \mathcal{A}=\begin{pmatrix}
		1 & 0 & 0 & 0 & 0 & 0 \\
		0 & 1 & 0 & 0 & 0 & 0\\
		0 & 0 & 1 & 0 & 0 & 0\\
		0 & 0 & 0 & 1 & 0 & 0\\
		0 & 0 & 0 & 0 & 1 & 0\\
		\lfloor  C\log f_k(\alpha)\rfloor & \lfloor C\log (2\alpha-1) \rfloor& \lfloor C\log \alpha \rfloor& \lfloor C\log 3 \rfloor & \lfloor C\log2 \rfloor & \lfloor C\log\left(2^x+1-2^{-x}\right) \rfloor 
	\end{pmatrix} ,$$
	with $C:= 10^{396}$ and choose $y:=\left(0,0,0,0,0,0\right)$. Now, by Lemma \ref{lem2.5}, we get $$c_2:=10^{-69}\qquad\text{and}\qquad l\left(\mathcal{L},y\right)\ge c_1:=3.87\cdot10^{68}.$$
	So, Lemma \ref{lem2.6} gives $S=6\cdot 10^{132}$ and $T=3\cdot 10^{66}$. Since $c_1^2\ge T^2+S$, then choosing $c_3:=3$ and $c_4:=\log\alpha$, we get $m\le 1474$.
	This was for $n\ge 3$. If $n\in \{0,1,2\}$, then $m<(n+3)x+3\le 5x+3\le 5598$. 
	
	Therefore, for the case when $0\le n\le k$, we have that all solutions $n$, $m$, $k$, $x$ to \eqref{eq:main} satisfy $k\in [2,800]$, $m\in [3,5597]$, $n\in [0,\min\{k,\lfloor(m+3)/x\rfloor\}]$ and $x\in[1,1119]$. A computational search in SageMath within these specified ranges for solutions to the Diophantine equation \eqref{eq:main} yielded only the solutions presented in the Theorem \ref{thm1.1}. To efficiently handle large \( L_n^{(k)} \) values, our SageMath 9.5 code utilized batch processing, iterating through all \( (n, m,k, x) \) combinations within these ranges. 
	
	\subsection{The case $n> k$}
	\subsubsection{Bounding $x$, $n$ and $m$ in terms of $k$}
	Again, we proceed as in Subsection \ref{subsec3.2}. We start by proving a series of results.
	\begin{lemma}\label{lem3.3}
		Let $n$, $m$, $k$, $x$ be integer solutions to Eq. \eqref{eq:main} with $n>k\ge 2$, $x\ge 2$ and $m\ge 3$, then	
		\begin{align*}
			x<2.6\cdot 10^{15} n k^4(\log k)^3\log n .
		\end{align*}	
	\end{lemma}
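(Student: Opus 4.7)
The approach mirrors Lemma \ref{lem3.1}, but since $n>k$ excludes the closed form $L_n^{(k)}=3\cdot 2^{n-2}$, I would substitute the Binet-like asymptotics of Lemma \ref{lem:Lnx} in its place. Writing $(L_{n+i}^{(k)})^x=(f_k(\alpha)(2\alpha-1))^x\alpha^{(n+i-1)x}(1+\eta_{n+i})$ for $i\in\{-1,0,1\}$ and $L_m^{(k)}=f_k(\alpha)(2\alpha-1)\alpha^{m-1}+e_k(m)$, I would substitute into \eqref{eq:main} and divide through by the size of the dominant term $(f_k(\alpha)(2\alpha-1))^x\alpha^{nx}$, producing the linear form
\[\Gamma_3:=(f_k(\alpha)(2\alpha-1))^{1-x}\alpha^{m-1-nx}-1.\]
After accounting for the $\eta_{n+1}$ correction of order $x\alpha^{-n}$, the omitted ratios $(L_n^{(k)}/L_{n+1}^{(k)})^x\approx\alpha^{-x}$ and $(L_{n-1}^{(k)}/L_{n+1}^{(k)})^x\approx\alpha^{-2x}$, and the negligible Binet residual $e_k(m)/L_m^{(k)}$, the upper bound takes the shape $|\Gamma_3|<C_0(x\alpha^{-n}+\alpha^{-x})$ for an absolute constant $C_0$.

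Nonvanishing of $\Gamma_3$ would follow exactly as in Lemma \ref{lem3.1}: if $(f_k(\alpha)(2\alpha-1))^{x-1}=\alpha^{m-1-nx}$, applying $N_{\mathbb{K}/\mathbb{Q}}$ with $\mathbb{K}=\mathbb{Q}(\alpha)$ and combining $|N(\alpha)|=1$ with $N((2\alpha-1)f_k(\alpha))\le 1$ (strict for $k\ge 3$) from Lemma \ref{lemGLm} would force a contradiction for $x\ge 2$. I would then apply Matveev's Theorem \ref{thm:Mat} with $t=3$, splitting the base $f_k(\alpha)(2\alpha-1)$ into $\gamma_1=f_k(\alpha)$ and $\gamma_2=2\alpha-1$, together with $\gamma_3=\alpha$, and with $b_1=b_2=1-x$, $b_3=m-1-nx$, $D=k$. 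Using $h(f_k(\alpha))<3\log k$ from \eqref{eq2.9} and $h(2\alpha-1)\le\log 3$, I would take $A_1=3k\log k$, $A_2=k\log 3$, $A_3=0.7$, and $B=m$ (valid by \eqref{m_b}). This produces a lower bound of the shape $\log|\Gamma_3|>-C_1k^4(\log k)^2\log m$ for an explicit constant $C_1$.

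Comparing the two estimates on $|\Gamma_3|$ gives $\min\{x\log\alpha,\;n\log\alpha-\log x\}<C_2\,k^4(\log k)^2\log m$, and inserting $\log m\le\log((n+3)x+3)\le\log(4nx)=\log n+\log x+O(1)$ from \eqref{m_b}, I would split into cases on which of the two quantities in the minimum is smaller. When $x\log\alpha$ is smaller, the self-referential inequality $x\log\alpha<C_2'\,k^4(\log k)^2(\log n+\log x)$ is resolved via Lemma \ref{Guz}; in the opposite case one obtains a bound on $n$, which must be converted into a bound on $x$ via the growth relation $m\sim nx$ and a uniform loosening of constants. After absorbing all constants into $2.6\cdot 10^{15}$, both cases combine into the stated bound. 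The principal obstacle is precisely this asymmetry of the upper bound $|\Gamma_3|<C_0(x\alpha^{-n}+\alpha^{-x})$: in the regime where the $x\alpha^{-n}$ term dominates, Matveev yields a bound on $n$ rather than on $x$, and the factor of $n$ appearing in the stated estimate originates from the care needed to convert such a bound into the required upper bound on $x$ through \eqref{m_b}.
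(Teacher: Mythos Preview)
Your approach has two genuine gaps. First, invoking Lemma~\ref{lem:Lnx} here is circular: the error bound $|\eta_{n+i}|<1.5xe^{1.5x/\alpha^{n+i-1}}/\alpha^{n+i-1}$ is only of order $x\alpha^{-n}$ provided $x/\alpha^{n-2}$ is bounded, but no such control on $x$ is yet available---establishing it is precisely the content of Lemma~\ref{lem3.3}. Without it the exponential factor $e^{1.5x/\alpha^{n+i-1}}$ may be enormous, and your claimed estimate $|\Gamma_3|<C_0(x\alpha^{-n}+\alpha^{-x})$ is unjustified. (In the paper, Lemma~\ref{lem:Lnx} is invoked only in Lemma~\ref{lem3.4}, \emph{after} Lemma~\ref{lem3.3} has supplied the needed bound on $x$.) Second, even granting your estimate, your Case~2 does not close: when $x\alpha^{-n}$ dominates, comparison with Matveev gives $n\log\alpha-\log x<C\,k^4(\log k)^2\log m$, a bound on $n$ and not on $x$; the relation \eqref{m_b} links $m$ to the product $nx$ and gives no way to deduce a bound on $x$ from one on $n$. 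A smaller issue: for $k=2$ one has $(2\alpha-1)f_2(\alpha)=\alpha$, so your norm argument does not exclude $\Gamma_3=0$ and a separate treatment would be required (as the paper itself has to do for the analogous $\Gamma_4$ in Lemma~\ref{lem3.4}).

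The paper sidesteps all of this by a different choice of linear form. It applies the Binet formula only to $L_m^{(k)}$ and keeps $L_{n+1}^{(k)}$ as an \emph{integer}, setting
\[
\Gamma_3:=f_k(\alpha)(2\alpha-1)\alpha^{m-1}\bigl(L_{n+1}^{(k)}\bigr)^{-x}-1.
\]
Then one checks directly that $0<\Gamma_3<(L_n^{(k)}/L_{n+1}^{(k)})^x<0.7^x$, a clean upper bound depending only on $x$ and requiring no a~priori control on $x$. In the Matveev application the algebraic number $\gamma_1=L_{n+1}^{(k)}$ has height $h(\gamma_1)\le\log(2\alpha^{n})<1.4n$, and \emph{this} is where the factor $n$ in the final estimate originates: one obtains $\log|\Gamma_3|>-C\,nk^4(\log k)^2\log m$ and hence $x<C'\,nk^4(\log k)^2\log m$ directly. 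Replacing $\log m$ by $\log((n+3)x+3)<2.25\log(nx)$ via \eqref{m_b} and applying Lemma~\ref{Guz} with $p:=nx$ then yields the stated inequality.
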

	\begin{proof}
		We go back to \eqref{eq:main} and rewrite it using the Binet formula in \eqref{eq:Lnwitherror} as
		\begin{align*}
			L_m^{(k)}-f_k(\alpha)(2\alpha-1)\alpha^{m-1}&=e_k(m)\\
			f_k(\alpha)(2\alpha-1)\alpha^{m-1}-\left(L_{n+1}^{(k)}\right)^{x}&=\left(L_{n}^{(k)}\right)^x-\left(L_{n-1}^{(k)}\right)^x-e_k(m)\\
			0<f_k(\alpha)(2\alpha-1)\alpha^{m-1}\left(L_{n+1}^{(k)}\right)^{-x}-1&=\left(\dfrac{L_{n}^{(k)}}{L_{n+1}^{(k)}}\right)^x-\left(\dfrac{L_{n-1}^{(k)}}{L_{n+1}^{(k)}}\right)^x-\dfrac{e_k(m)}{\left(L_{n+1}^{(k)}\right)^x}\\
			&<\left(\dfrac{L_{n}^{(k)}}{L_{n+1}^{(k)}}\right)^x<0.7^x<\dfrac{1}{1.4^x},
		\end{align*}	
		The above calculation is justified for $n>k\ge 2$ and $x\ge 2$, because then $n\ge 3$ so $L_n^{(k)}>L_{n-1}^{(k)}> 1$, so since $x\ge 2$, 
		$$
		\left(L_{n}^{(k)}\right)^x\ge \left(L_{n-1}^{(k)}+1\right)^x>\left(L_{n-1}^{(k)}\right)^x+x\ge \left(L_{n-1}^{(k)}\right)^x+2>\left(L_{n-1}^{(k)}\right)^x+e_k(m)
		$$
		and 
		$$
		\left(L_{n-1}^{(k)}\right)^x\ge 2^x\ge 4>1.5>-e_{k}(m),\qquad {\text{\rm so}}\qquad (L_{n-1}^{(k)})^x+e_k(m)>0.
		$$
		We thus get
		\begin{align}\label{g3}
			|\Gamma_3|:=\left|f_k(\alpha)(2\alpha-1)\alpha^{m-1}\left(L_{n+1}^{(k)}\right)^{-x}-1\right|<\dfrac{1}{1.4^x}.
		\end{align}
		Note that $\Gamma_3\ne 0$, otherwise we would have 
		\begin{align*}
			f_k(\alpha)= \dfrac{\left(L_{n+1}^{(k)}\right)^x}{(2\alpha-1)\alpha^{m-1}}.
		\end{align*}	
		Taking norms in ${\mathbb K}={\mathbb Q}(\alpha)$ as before, the above equation becomes
		\begin{align*}
			N\left(f_k(\alpha)\right)\cdot N(2\alpha-1)=\left(L_{n+1}^{(k)}\right)^{kx}.
		\end{align*}
		We have already shown before that the left--hand side above is $\le 1$. Therefore, 
		\begin{align*}
			1\ge 	N\left(f_k(\alpha)\right)\cdot N(2\alpha-1)=\left(L_{n+1}^{(k)}\right)^{kx}\ge \left(L_{4}^{(2)}\right)^4=2401,
		\end{align*}
		a contradiction, for all $n>k\ge 2$ and $x\ge 2$. 
		
		The algebraic number field containing the following $\gamma_i$'s is $\mathbb{K} := \mathbb{Q}(\alpha)$. We have $D = k$, $t :=3$,
		\begin{equation}\nonumber
			\begin{aligned}
				\gamma_{1}&:=L_{n+1}^{(k)}, \qquad\gamma_{2}:=(2\alpha-1)f_k(\alpha),\qquad \gamma_{3}:=\alpha,\\
				b_{1}&:=-x,\qquad  b_{2}:=1,\qquad b_{3}:=m-1.
			\end{aligned}
		\end{equation}
		Since $h\left(L_{n+1}^{(k)}\right)\le h(2\alpha^n)<n\log2\alpha<n\log 4<1.4n$, we can take $A_1:=1.4kn$. As before, we take $A_{2}:=6k\log k$, $A_{3}:=0.7$ and  $B:=m$. Now, by Theorem \ref{thm:Mat},
		\begin{align}\label{eq3.12}
			\log |\Gamma_3| &> -1.4\cdot 30^{6} \cdot 3^{4.5}\cdot k^2 (1+\log k)(1+\log m)\cdot 1.4kn\cdot   0.7\cdot 6k\log k\nonumber\\
			&> -4.0\cdot 10^{12} n k^4(\log k)^2 \log m.
		\end{align}
		Comparing \eqref{g3} and \eqref{eq3.12}, we get
		\begin{align*}
			x\log 1.4&<4.0\cdot 10^{12} n k^4(\log k)^2 \log m,\\
			x&<1.2\cdot 10^{13} n k^4(\log k)^2 \log m.
		\end{align*}
		Now, by inequality \eqref{m_b}, $m<(n+3)x+3$, hence 
		\begin{align*}
			x&<1.2\cdot 10^{13} n k^4(\log k)^2 \log ((n+3)x+3)\\
			&<2.7\cdot 10^{13} n k^4(\log k)^2 \log (nx).
		\end{align*}
		Now, multiplying $n$ on both sides of the above inequality, we get 
		$nx<2.7\cdot 10^{13} n^2 k^4(\log k)^2 \log (nx)$. We now apply Lemma \ref{Guz} with  $p:=nx$, $r:=1$, $T:=2.7\cdot 10^{13} n^2 k^4(\log k)^2$ and have 
		\begin{align*}
			nx&<2\cdot 2.7\cdot 10^{13} n^2 k^4(\log k)^2\log 2.7\cdot 10^{13} n^2 k^4(\log k)^2\\
			&=5.4\cdot 10^{13} n^2 k^4(\log k)^2 \left(\log (2.7\cdot 10^{13})+2\log n+4\log k+2\log\log k\right)\\
			&<5.4\cdot 10^{13} n^2 k^4(\log k)^3\log n  \left(\dfrac{31}{\log n\log k}+\dfrac{2}{\log k}+\dfrac{4}{\log n}+\dfrac{2\log\log k}{\log n\log k}\right),
		\end{align*}
		so that $nx<2.6\cdot 10^{15} n^2 k^4(\log k)^3\log n $. Dividing both sides by $n$, we get $x<2.6\cdot 10^{15} n k^4(\log k)^3\log n $.
	\end{proof}
	
	To proceed, we now work under the assumption that $n > 800$, so that we can explicitly determine an upper bound for $n$, $m$ and $x$ in terms of $k$ only. We prove the following result.
	\begin{lemma}\label{lem3.4}
		Let $n$, $m$, $k$, $x$ be integer solutions to Eq. \eqref{eq:main} with $n>\max\{800,k\}$, $x\ge 2$ and $m\ge 3$, then	
		\begin{align*}
			n<5.5\cdot 10^{27} k^6(\log k)^6, \qquad  x< 8.7\cdot 10^{30}k^7(\log k)^8\qquad\text{and}\qquad m<5.6\cdot 10^{44}k^{10}(\log k)^{12}.	\end{align*}	
	\end{lemma}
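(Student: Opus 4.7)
The argument refines Lemma~\ref{lem3.3} by constructing a sharper linear form in logarithms, from which $n$ can be bounded in terms of $k$ alone, after which $x$ and $m$ follow by substitution.

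First, I would expand the leading power $(L_{n+1}^{(k)})^x$ in \eqref{eq:main} via Lemma~\ref{lem:Lnx}, apply the Binet formula \eqref{eq:Lnwitherror} to $L_m^{(k)}$, rearrange the dominant terms $f_k(\alpha)(2\alpha-1)\alpha^{m-1}$ and $f_k(\alpha)^x(2\alpha-1)^x\alpha^{nx}$ onto one side of the identity and the subordinate contributions $(L_n^{(k)})^x$, $(L_{n-1}^{(k)})^x$, the $\eta_{n+1}$ error, and $e_k(m)$ onto the other, and then divide through by $f_k(\alpha)^x(2\alpha-1)^x\alpha^{nx}$. This yields
$$
|\Gamma_4|:=\bigl|f_k(\alpha)^{1-x}(2\alpha-1)^{1-x}\alpha^{m-1-nx}-1\bigr|<C_0\Bigl(\frac{1}{\alpha^{x}}+\frac{x}{\alpha^{n}}\Bigr),
$$
where the $\alpha^{-x}$ term arises from $(L_n^{(k)})^x/f_k(\alpha)^x(2\alpha-1)^x\alpha^{nx}\approx\alpha^{-x}$ and the $x\alpha^{-n}$ term from the bound on $|\eta_{n+1}|$ in Lemma~\ref{lem:Lnx}. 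Nonvanishing of $\Gamma_4$ is established by a norm computation in $\mathbb{K}=\mathbb{Q}(\alpha)$ using Lemma~\ref{lemGLm}, paralleling the treatment of $\Gamma_3$ in Lemma~\ref{lem3.3}.

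Next, I would apply Matveev's Theorem~\ref{thm:Mat} to $\Gamma_4$ with $t=3$, $\gamma_1:=f_k(\alpha)$, $\gamma_2:=2\alpha-1$, $\gamma_3:=\alpha$, $b_1=b_2=1-x$, $b_3=m-1-nx$, and heights $A_1:=3k\log k$, $A_2:=k\log 3$, $A_3:=0.7$, $B:=m$, $D:=k$. This gives $\log|\Gamma_4|>-C_1 k^4(\log k)^2\log m$. Comparing with the upper bound forces
$$
\min\bigl\{x\log\alpha,\;n\log\alpha-\log(C_0x)\bigr\}<C_1k^4(\log k)^2\log m+O(1).
$$
In the second case, one has $n$ bounded directly; substituting Lemma~\ref{lem3.3} together with $m<(n+3)x+3<4nx$ from \eqref{m_b} gives $\log m=O(\log n+\log k)=O(\log n)$ (since $n>k$), so an inequality of the form $n<Ck^{a}(\log k)^{b}(\log n)^{r}$, to which Lemma~\ref{Guz} is applied (possibly twice) to yield $n<5.5\cdot10^{27}k^6(\log k)^6$. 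In the first case, $x<Ck^4(\log k)^2\log n$, which when recombined with Lemma~\ref{lem3.3} reduces via Lemma~\ref{Guz} to the same bound on $n$.

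Finally, having bounded $n$, I would substitute into Lemma~\ref{lem3.3}: since $\log n=O(\log k+\log\log k)=O(\log k)$, the bound $x<2.6\cdot10^{15}nk^4(\log k)^3\log n$ gives $x<8.7\cdot10^{30}k^7(\log k)^8$, and then $m<(n+3)x+3\le 4nx<5.6\cdot10^{44}k^{10}(\log k)^{12}$. The main obstacle is the first step: the error analysis producing the genuine $\min\{\alpha^{-x},x\alpha^{-n}\}$ decay of $|\Gamma_4|$, rather than the merely $1.4^{-x}$ decay of $\Gamma_3$, requires careful use of Lemma~\ref{lem:Lnx} on $(L_{n+1}^{(k)})^x$ and a precise accounting of the subordinate $(L_{n-1}^{(k)})^x/f_k(\alpha)^x(2\alpha-1)^x\alpha^{nx}$ and $e_k(m)$ contributions. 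A secondary technical point is the split into the two dominance cases, which in practice means running the Matveev--Guzman comparison twice and taking the worse bound.
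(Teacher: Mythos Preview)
Your outline contains a genuine gap in the ``first case'' (your Case B), where $\alpha^{-x}$ dominates the error and Matveev's theorem yields only $x<Ck^4(\log k)^2\log n$. You claim that this, ``recombined with Lemma~\ref{lem3.3}, reduces via Lemma~\ref{Guz} to the same bound on $n$,'' but it does not: Lemma~\ref{lem3.3} is also an \emph{upper} bound on $x$ in terms of $n$ and $k$, so you now hold two upper bounds on $x$ and none on $n$. Nothing in your plan precludes, say, $x=2$ and $n$ astronomically large. The paper resolves this case by a second linear form
\[
\Gamma_5:=f_k(\alpha)^{x-1}(2\alpha-1)^{x-1}\alpha^{(n-1)x-(m-1)}\bigl(1+\alpha^{-x}-\alpha^{-2x}\bigr)-1,
\]
obtained by expanding \emph{all three} powers $(L_{n+i}^{(k)})^x$ via Lemma~\ref{lem:Lnx} (not just the leading one) and retaining the factor $1+\alpha^{-x}-\alpha^{-2x}$ rather than discarding it. This gives $|\Gamma_5|<3/\alpha^{0.97n-5}$ with no $\min$, so a second application of Matveev (now with $t=3$ and an extra height $A_3=O(x)$) produces $n<Ck^3(\log k)^2\,x\log x$; substituting the Case~II bound on $x$ then yields the stated $n<5.5\cdot 10^{27}k^6(\log k)^6$. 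The nonvanishing of $\Gamma_5$ is itself nontrivial and occupies about a page of the paper, including separate treatment of $k=2,\ldots,7$.

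Two smaller points you should also address. First, your norm argument for $\Gamma_4\ne 0$ fails at $k=2$, since there $N\bigl((2\alpha-1)f_k(\alpha)\bigr)=1$ by Lemma~\ref{lemGLm}(iii); the paper handles $k=2$ separately by showing $\Gamma_4=0$ would force $m=nx$, and then derives a contradiction from the classical Binet formula. Second, you take $B:=m$ in Matveev, but the exponents in $\Gamma_4$ are $1-x$ and $m-1-nx$, and \eqref{m_b} gives $|m-1-nx|\le 3x+2$, so one may (and the paper does) take $B=O(x)$; combined with merging $f_k(\alpha)$ and $2\alpha-1$ into a single $\gamma_1$ so that $t=2$, this saves a factor of $k$ and replaces $\log m$ by $\log x$ in the Matveev bound.
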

	\begin{proof}
		Since $n>k$, then Lemma \ref{lem3.3} tells us that 
		\begin{align}\label{x_bound1}
			x<2.6\cdot 10^{15} n^5(\log n)^4.
		\end{align}
		So, for each $i\in\{-1,0,1\}$, the inequality 
		\begin{align*}
			\dfrac{x}{\alpha^{n+i-1}}<\dfrac{2.6\cdot 10^{15} n^5(\log n)^4}{\alpha^{n-2}}<\dfrac{1}{\alpha^{0.7n}},
		\end{align*}
		holds for all $n>800$. Now, Lemma \ref{lem:Lnx} tells us that 
		\begin{align}\label{eq3.14}
			\left(L_{n+i}^{(k)}\right)^x = f_k(\alpha)^x (2\alpha-1)^x\alpha^{(n+i-1)x}(1 + \eta_{n}),\qquad\text{with}\qquad	
			|\eta_{n}| < \dfrac{1.5xe^{1.5x/\alpha^{n+i-1}}}{\alpha^{n+i-1}}<\dfrac{10}{\alpha^{0.7n}}.
		\end{align}
		So, we rewrite \eqref{eq:main} using \eqref{eq3.14} as
		\begin{align*}
			f_k(\alpha)(2\alpha-1)\alpha^{m-1}&-f_k(\alpha)^x(2\alpha-1)^x\alpha^{(n-1)x}\left(1+\alpha^{-x}-\alpha^{-2x}\right)\\
			&=f_k(\alpha)^x(2\alpha-1)^x\alpha^{(n-1)x}\left(1+\alpha^{-x}-\alpha^{-2x}\right)\eta_n+e_k(m).
		\end{align*}
		Dividing both sides of the above equality by $f_k(\alpha)^x(2\alpha-1)^x\alpha^{(n-1)x}$ and taking absolute values, we get
		\begin{align*}
			&\left|f_k(\alpha)(2\alpha-1)\alpha^{m-1}f_k(\alpha)^{-x}(2\alpha-1)^{-x}\alpha^{-(n-1)x}-\left(1+\alpha^{-x}-\alpha^{-2x}\right)\right|\\
			&<|\eta_n|\left(1+\alpha^{-x}-\alpha^{-2x}\right)+\dfrac{1.5}{f_k(\alpha)^x(2\alpha-1)^x\alpha^{(n-1)x}}\\
			&<\dfrac{15}{\alpha^{0.7n}}+\dfrac{1.5}{\alpha^{(n-1)x}}<\dfrac{18}{\alpha^{0.7n}},
		\end{align*}
		where we have used the fact that $0<1+\alpha^{-x}-\alpha^{-2x}<1.5$, 
		$f_k(\alpha)(2\alpha-1)\alpha^{n-1}>\alpha^{n-1}$ and $(n-1)x>0.7n$, for all $n>800$, $x\ge 2$ and $k\ge 2$. This means that
		\begin{align}\label{g4}
			|\Gamma_4|:=	\left|f_k(\alpha)^{1-x}(2\alpha-1)^{1-x}\alpha^{m-1-(n-1)x}-1\right|<\dfrac{18}{\alpha^{0.7n}}+\dfrac{1}{\alpha^x}+\dfrac{1}{\alpha^{2x}}<\dfrac{27}{\alpha^{\min\{0.7n,x\}}}.
		\end{align}
		Note that $\Gamma_4\ne 0$, otherwise we would have 
		\begin{align}\label{fk}
			\left((2\alpha-1)f_k(\alpha)\right)^{x-1}=\alpha^{m-1-(n-1)x}.
		\end{align}	
		Applying norms in ${\mathbb K}={\mathbb Q}(\alpha)$ and using $|N(\alpha)|=1$, equation \eqref{fk} becomes
		\begin{align*}
			|N\left((2\alpha-1)f_k(\alpha)\right)|^{x-1}=1,
		\end{align*}
		which implies that  $x=1$, for $k\ge 3$, contradicting the working assumption that $x\ge 2$. For the special case when $k=2$, we have that $(2\alpha-1)f_k(\alpha)=\alpha$, so that \eqref{fk} becomes $\alpha^{x-1}=\alpha^{m-1-(n-1)x}$, giving $m=nx$. At this point, we go back and rewrite \eqref{eq:main} with $k=2$ and $m=nx$ as
		\begin{align*}
			\left(L_{n+1}\right)^x+\left(L_{n}\right)^x-\left(L_{n-1}\right)^x=L_{nx}.
		\end{align*}
		Now, since $\left(L_{n}\right)^x-\left(L_{n-1}\right)^x>0$ for all $x\ge 2$ and $n\ge 4$, we have by the Binet formula $L_n=\alpha^n+\beta^n$, that
		\begin{align*}
			\left(\alpha^{n+1}-1\right)^x<\left(\alpha^{n+1}+\beta^{n+1}\right)^x= \left(L_{n+1}\right)^x\le \left(L_{n+1}\right)^x+\left(L_{n}\right)^x-\left(L_{n-1}\right)^x=L_{nx}< \alpha^{nx}+1.
		\end{align*}
		This leads to
		\begin{align*}
			\alpha^x\left(1-\frac{1}{\alpha^{n+1}}\right)^x< 1+\frac{1}{\alpha^{nx}},
		\end{align*}
		so that 
		\begin{align*}
			2<	\alpha^2\left(1-\frac{1}{\alpha^{4+1}}\right)^2\le \alpha^x\left(1-\frac{1}{\alpha^{n+1}}\right)^x< 1+\frac{1}{\alpha^{nx}}\le  1+\frac{1}{\alpha^{4\cdot 2}}<1.1,
		\end{align*}
		which is also a contradiction. Thus, $\Gamma_4\ne 0$.
		
		The algebraic number field containing the following $\gamma_i$'s is $\mathbb{K} := \mathbb{Q}(\alpha)$. We have $D = k$, $t :=2$,
		\begin{equation}\nonumber
			\begin{aligned}
				\gamma_{1}&:=(2\alpha-1)f_k(\alpha),\qquad \gamma_{2}:=\alpha,\\
				b_{1}&:=1-x,\qquad b_{2}:=m-1-(n-1)x.
			\end{aligned}
		\end{equation}
		As before, we take $A_{1}:=6k\log k$ and $A_{2}:=0.7$. By the second inequality in \eqref{m_b}, we can take $B:=x$. So, Theorem \ref{thm:Mat} gives
		\begin{align}\label{eq3.16}
			\log |\Gamma_4| &> -1.4\cdot 30^{5} \cdot 2^{4.5}\cdot k^2 (1+\log k)(1+\log x)\cdot  0.7\cdot 6k\log k\nonumber\\
			&> -2.0\cdot 10^{10} k^3(\log k)^2 \log x.
		\end{align}
		Comparing \eqref{g4} and \eqref{eq3.16}, we get
		\begin{align*}
			\min\{0.7n,x\}\log\alpha-\log 27&<2.0\cdot 10^{10} k^3(\log k)^2 \log x,\\
			\min\{0.7n,x\}&<5.0\cdot 10^{10} k^3(\log k)^2 \log x.
		\end{align*}
		
		We distinguish between two cases.
		
		\textbf{Case I}: If the  $\min\{0.7n,x\}=0.7n$, then $n<7.2\cdot 10^{10} k^3(\log k)^2 \log x$, and using Lemma \ref{lem3.3}, we get
		\begin{align*}
			n&<7.2\cdot 10^{10} k^3(\log k)^2 \log \left(  2.6\cdot 10^{15} n k^4(\log k)^3\log n \right)  \\
			&=7.2\cdot 10^{10} k^3(\log k)^2 \left(\log   2.6\cdot 10^{15} +\log n +4\log k +3\log\log k+\log\log n \right)\\
			&<7.2\cdot 10^{10} k^3(\log k)^3\log n \left(\dfrac{36}{\log n\log k} +\dfrac{1}{\log k} +\dfrac{4}{\log n} +3\dfrac{\log\log k}{\log n\log k}+\dfrac{\log\log n}{\log n\log k} \right)\\
			&<7.3\cdot 10^{11} k^3(\log k)^3\log n.
		\end{align*}
		This tells us that $n/\log n<7.3\cdot 10^{11} k^3(\log k)^3$.
		Applying Lemma \ref{Guz} with  
		$$
		p:=n, \quad r:=1, \quad T:=7.3\cdot 10^{11} k^3(\log k)^3,
		$$ 
		we get 
		\begin{align*}
			n&<2\cdot7.3\cdot 10^{11} k^3(\log k)^3\log \left(7.3\cdot 10^{11} k^3(\log k)^3\right)\\
			&<1.5\cdot 10^{12} k^3(\log k)^3 \left(\log (7.3\cdot 10^{11})+3\log k+3\log\log k\right)\\
			&<1.5\cdot 10^{12} k^3(\log k)^4 \left(\dfrac{28}{\log k}+3+\dfrac{3\log\log k}{\log k}\right),
		\end{align*}
		so that $n<6.3\cdot 10^{13} k^3(\log k)^4$. Inserting this upper bound of $n$ in Lemma \ref{lem3.3}, we have
		\begin{align*}
			x&<2.6\cdot 10^{15} \left(6.3\cdot 10^{13} k^3(\log k)^4\right) k^4(\log k)^3\log \left(6.3\cdot 10^{13} k^3(\log k)^4\right)\\
			&< 1.8\cdot 10^{29}k^7(\log k)^8\left(\dfrac{32}{\log k}+3+\dfrac{4\log\log k}{\log k}\right)\\
			&<  8.7\cdot 10^{30}k^7(\log k)^8.
		\end{align*}
		By inequality \eqref{m_b}, $m<(n+3)x+3<5.6\cdot 10^{44}k^{10}(\log k)^{12}$.
		
		\medskip
		
		\textbf{Case II}: If the  $\min\{0.7n,x\}=x$, then $x<5.0\cdot 10^{10} k^3(\log k)^2 \log x$. Applying Lemma \ref{Guz} with  $p:=x$, $r:=1$, $T:=5.0\cdot 10^{10} k^3(\log k)^2$, we get 
		\begin{align*}
			x&<2\cdot 5.0\cdot 10^{10} k^3(\log k)^2\log \left(5.0\cdot 10^{10} k^3(\log k)^2\right)\\
			&=1.0\cdot 10^{11} k^3(\log k)^2 \left(\log (5.0\cdot 10^{10})+3\log k+2\log\log k\right)\\
			&< 10^{11} k^3(\log k)^3 \left(\dfrac{25}{\log k}+3+\dfrac{2\log\log k}{\log k}\right),
		\end{align*}
		so that 
		\begin{align}\label{xc}
			x<4.0\cdot 10^{12} k^3(\log k)^3.
		\end{align}
		Now, since $x\le 0.7n$, then for each $i\in\{-1,0,1\}$, the inequality 
		\begin{align*}
			\dfrac{x}{\alpha^{n+i-1}}<\dfrac{0.7n}{\alpha^{n-2}}<\dfrac{1}{\alpha^{0.97n}},
		\end{align*}
		holds for all $n>800$. Now, Lemma \ref{lem:Lnx} tells us that 
		\begin{align}\label{eq3.17}
			\left(L_{n+i}^{(k)}\right)^x = f_k(\alpha)^x (2\alpha-1)^x\alpha^{(n+i-1)x}(1 + \eta_{n}),\qquad\text{with}\qquad	
			|\eta_{n}| < \dfrac{1.5xe^{1.5x/\alpha^{n+i-1}}}{\alpha^{n+i-1}}<\dfrac{2}{\alpha^{0.97n}}.
		\end{align}
		So, we rewrite \eqref{eq:main} using \eqref{eq3.17} as
		\begin{align*}
			f_k(\alpha)(2\alpha-1)\alpha^{m-1}&-f_k(\alpha)^x(2\alpha-1)^x\alpha^{(n-1)x}\left(1+\alpha^{-x}-\alpha^{-2x}\right)\\
			&=f_k(\alpha)^x(2\alpha-1)^x\alpha^{(n-1)x}\left(1+\alpha^{-x}-\alpha^{-2x}\right)\eta_n+e_k(m).
		\end{align*}
		Dividing both sides of the above equality by $	f_k(\alpha)(2\alpha-1)\alpha^{m-1}$ and taking absolute values, we get
		\begin{align*}
			&\left|f_k(\alpha)^{x-1}(2\alpha-1)^{x-1}\alpha^{(n-1)x-(m-1)}\left(1+\alpha^{-x}-\alpha^{-2x}\right)-1\right|\\
			&<|\eta_n|f_k(\alpha)^{x-1}(2\alpha-1)^{x-1}\alpha^{(n-1)x-(m-1)}\left(1+\alpha^{-x}-\alpha^{-2x}\right)+\dfrac{1.5}{f_k(\alpha)(2\alpha-1)\alpha^{m-1}}\\
			&<\dfrac{2}{\alpha^{0.97n}}\cdot \dfrac{1}{\left(f_k(\alpha)(2\alpha-1)\alpha\right)^{1-x}}\cdot  \dfrac{1}{\alpha^{(m-2)-(n-2)x}}
			+\dfrac{1.5}{\alpha^{m-1}}\\
			&<\dfrac{2}{\alpha^{0.97n}}\cdot \dfrac{1/\left(0.5\cdot 2\cdot1\right)^{1-x}}{\alpha^{-5}}
			+\dfrac{1.5}{\alpha^{n-4}}<\dfrac{2}{\alpha^{0.97n-5}} +\dfrac{1.5}{\alpha^{n-4}}\\
			& <\dfrac{3}{\alpha^{0.97n-5}},
		\end{align*}
		for $n>800$, $x\ge 2$ and $k\ge 2$. This means that
		\begin{align}\label{g5}
			|\Gamma_5|:=	\left|f_k(\alpha)^{x-1}(2\alpha-1)^{x-1}\alpha^{(n-1)x-(m-1)}\left(1+\alpha^{-x}-\alpha^{-2x}\right)-1\right|<\dfrac{3}{\alpha^{0.97n-5}}.
		\end{align}
		Note that $\Gamma_5\ne 0$, otherwise we would have 
		\begin{align*}
			\left((2\alpha-1)f_k(\alpha)\right)^{x-1}\alpha^{(n-1)x-(m-1)}\left(1+\alpha^{-x}-\alpha^{-2x}\right)=1.
		\end{align*}	
		Taking norms in ${\mathbb K}={\mathbb Q}(\alpha)$ as before and using parts $(i)$ and $(ii)$ of Lemma \ref{lemGLm}, the above equation becomes
		\begin{align}\label{prod}
			\left(\dfrac{\left( 2^{k+1} - 3\right)  (k - 1)^2}{ 2^{k+1}k^k - (k + 1)^{k+1}}\right)^{x-1}\cdot N\left(1+\alpha^{-x}-\alpha^{-2x}\right)&=1,\nonumber\\
			\left(\dfrac{\left( 2^{k+1} - 3\right)  (k - 1)^2}{ 2^{k+1}k^k - (k + 1)^{k+1}}\right)^{x-1}\cdot \left(1+\alpha^{-x}-\alpha^{-2x}\right)\prod_{j= 2}^k\left|1+(\alpha^{(j)})^{-x}-(\alpha^{(j)})^{-2x}\right|&=1.
		\end{align}
		Since $1+\alpha^{-x}-\alpha^{-2x}<2$, for all $x\ge 2$, we have from \eqref{prod} that
		\begin{align*}
			1&=	\left(\dfrac{\left( 2^{k+1} - 3\right)  (k - 1)^2}{ 2^{k+1}k^k - (k + 1)^{k+1}}\right)^{x-1}\cdot \left(1+\alpha^{-x}-\alpha^{-2x}\right)\prod_{j= 2}^k\left|1+(\alpha^{(j)})^{-x}-(\alpha^{(j)})^{-2x}\right|\\
			&<\left(\dfrac{\left( 2^{k+1} - 3\right)  (k - 1)^2}{ 2^{k+1}k^k - (k + 1)^{k+1}}\right)^{x-1}\cdot 2\cdot \prod_{j= 2}^k\left|(\alpha^{(j)})^{-2x}-(\alpha^{(j)})^{-x}-1\right|\\
			&<\left(\dfrac{\left( 2^{k+1} - 3\right)  (k - 1)^2}{ 2^{k+1}k^k - (k + 1)^{k+1}}\right)^{x-1}\cdot 2\prod_{j= 2}^k3\left|(\alpha^{(j)})^{-2x}\right|
			<\left(\dfrac{\left( 2^{k+1} - 3\right)  (k - 1)^2}{ 2^{k+1}k^k - (k + 1)^{k+1}}\right)^{x-1}\cdot 2\cdot 3^{k-1}\alpha^{2x}\\
			&<\left(8^{x/(x-1)}\cdot3^{(k-1)/(x-1)} \dfrac{\left( 2^{k+1} - 3\right)  (k - 1)^2}{ 2^{k+1}k^k - (k + 1)^{k+1}}\right)^{x-1}
			\\
			&<\left(8^{2}\cdot3^{k-1} \dfrac{\left( 2^{k+1} - 3\right)  (k - 1)^2}{ 2^{k+1}k^k - (k + 1)^{k+1}}\right)^{x-1}
			<1,
		\end{align*}
		for all $k\ge 8$ and $x\ge 2$, a contradiction. When $k=3,4,5,6,7$, we have that the expression 
		\begin{align*}
			\dfrac{\left( 2^{k+1} - 3\right)  (k - 1)^2}{ 2^{k+1}k^k - (k + 1)^{k+1}},	
		\end{align*}
		in \eqref{prod} is equal to 
		$$
		13/44,\quad  29/563, \qquad 61/9584, \quad 125/205937\quad  253/5390272,
		$$
		respectively. Since $13\nmid 44$, $29\nmid 563$, $61\nmid 9584$, $125\nmid 205937$ and $253\nmid 5390272$, equation \eqref{prod} can not hold when $k=3,4,5,6,7$. When $k=2$, we have that  
		\begin{align*}
			\dfrac{\left( 2^{k+1} - 3\right)  (k - 1)^2}{ 2^{k+1}k^k - (k + 1)^{k+1}}=1.	
		\end{align*}
		In this case, equation \eqref{prod} becomes 
		\begin{align*}
			\left(1+\alpha^{-x}-\alpha^{-2x}\right)\left(1+\beta^{-x}-\beta^{-2x}\right)=\pm 1,
		\end{align*}
		where $\alpha=\left(1+\sqrt5\right)/2$ and $\beta=\left(1-\sqrt5\right)/2 $. We multiply both sides of the above equation by $(\alpha\beta)^{2x}=1$ and get 
		\begin{align}\label{lx}
			\left(\alpha^{2x}+\alpha^{x}-1\right)\left(\beta^{2x}+\beta^{x}-1\right)&=\pm 1,\nonumber\\
			1+ (-1)^x(\alpha^x+\beta^x)-\left(\alpha^{2x}+\beta^{2x}\right)+(-1)^x+1-(\alpha^x+\beta^x)      &=\pm 1,\nonumber\\
			2+ ((-1)^x-1)L_x-L_{2x}+(-1)^x     &=\pm 1.
		\end{align}
		Now, if $x$ is even in \eqref{lx}, then we have $L_{2x} =2,4$ leading to $x=0$ which contradicts the working assumption that $x\ge 2$. On the other hand, if $x$ is odd in \eqref{lx}, then $L_{2x}+L_x =0,2$, therefore we get that $L_{2x} =-L_x,~2-L_x$, which are both negative numbers for $x\ge 2$, another contradiction. Thus, in all cases, $\Gamma_5\ne 0$.
		
		The algebraic number field containing the following $\gamma_i$'s is $\mathbb{K} := \mathbb{Q}(\alpha)$. We have $D = k$, $t :=3$,
		\begin{equation}\nonumber
			\begin{aligned}
				\gamma_{1}&:=(2\alpha-1)f_k(\alpha),\qquad \gamma_{2}:=\alpha,\qquad\gamma_{3}:=\left(1+\alpha^{-x}-\alpha^{-2x}\right),\\
				b_{1}&:=x-1,\qquad b_{2}:=(n-1)x-(m-1),\qquad b_3:=1.
			\end{aligned}
		\end{equation}
		As before, we take $A_{1}:=6k\log k$ and $A_{2}:=0.7$. 
		For $A_3$, we first compute
		\begin{align*}
			Dh(\gamma_3)=kh\left(1+\alpha^{-x}-\alpha^{-2x}\right)<3kx(\log\alpha)/k,
		\end{align*}
		so that we can take $A_3:=3x$.
		By the second inequality in \eqref{m_b}, we can take $B:=x$. So, Theorem \ref{thm:Mat} gives
		\begin{align}\label{eq3.19}
			\log |\Gamma_5| &> -1.4\cdot 30^{6} \cdot 3^{4.5}\cdot k^2 (1+\log k)(1+\log x)\cdot   0.7\cdot 6k\log k\cdot 3x\nonumber\\
			&> -1.1\cdot 10^{13} x k^3(\log k)^2 \log x.
		\end{align}
		Comparing \eqref{g5} and \eqref{eq3.19}, we get
		\begin{align*}
			(0.97n-5)\log\alpha-\log 3&<1.1\cdot 10^{13} x k^3(\log k)^2 \log x,\\
			n&<2.8\cdot 10^{13}  k^3(\log k)^2 x\log x.
		\end{align*}
		Moreover, by \eqref{xc}, $x<4.0\cdot 10^{12} k^3(\log k)^3$, so we have 
		\begin{align*}
			n&<2.8\cdot 10^{13}  k^3(\log k)^2 \cdot 4.0\cdot 10^{12} k^3(\log k)^3\cdot \log \left(4.0\cdot 10^{12} k^3(\log k)^3\right)\\
			&<1.2\cdot 10^{26} k^6(\log k)^6\left(\dfrac{30}{\log k}+3+3\dfrac{\log\log k}{\log k}\right)\\
			&<5.5\cdot 10^{27} k^6(\log k)^6.
		\end{align*}
		Lastly, by inequality \eqref{m_b}, $m<(n+3)x+3<2.3\cdot 10^{40}k^9(\log k)^{9}$.
		
		Comparing all inequalities of $n$, $m$ and $x$ in both cases, we have the inequalities in Lemma \ref{lem3.4}.
	\end{proof}
	\subsubsection{The case $k>800$}
	The inequalities in Lemma \ref{lem3.4} were obtained under the assumptions that $n > 800$. However, when $n \le 800$, the inequalities \eqref{m_b} and \eqref{x_bound1} yield even smaller
	upper bounds for $x$ and $m$ in terms of $k$.
	From now on, let us assume that $k > 800$.
	By Lemma \ref{lem3.4}, we get 
	\begin{align*}
		n+i<5.51\cdot 10^{27} k^6(\log k)^6<2^{0.24k}	\qquad\text{and}\qquad m<5.6\cdot 10^{44}k^{10}(\log k)^{12} <2^{0.39k},
	\end{align*}
	for all $k>800$ and $i\in\{-1,0,1\}$. 
	By Lemma \ref{Ln:x} with $c=0.39$ and $x=1$, we have
	\begin{align*}
		L_m^{(k)} = 3 \cdot 2^{m-2} \left( 1 + \xi_m \right), \quad \text{with} \quad |\xi_m| < \dfrac{2}{2^{0.22k}}.
	\end{align*}
	Again, by Lemma \ref{Ln:x} with $c=0.24$ and $x\ge 2$, we have
	\begin{align*}
		\left(L_{n+i}^{(k)}\right)^x = 3^x\cdot 2^{(n+i-2)x} \left(1 +\xi_{n+i} \right), \quad \text{with} \quad |\xi_{n+i}| <\dfrac{2}{2^{0.52k}}.	
	\end{align*}
Now, for all $n>k\ge 2$, we rewrite \eqref{eq:main} as
	\begin{align*}
		\left|3^x\cdot 2^{(n-1)x}\left(1+2^{-x}-2^{-2x}\right) - 3 \cdot 2^{m-2} \right|&\le\left( 3^x\cdot 2^{(n-1)x} \left(1+2^{-x}+2^{-2x}\right)+3 \cdot 2^{m-2} \right)\cdot \dfrac{2}{2^{0.22k}},
	\end{align*}
where we chose $2/2^{0.22k}$ over $2/2^{0.52k}$ since $2/2^{0.52k}<2/2^{0.22k}$. In the above, we divide through by the $\max\{3^x\cdot 2^{(n-1)x}\left(1+2^{-x}-2^{-2x}\right),~ 3 \cdot 2^{m-2}\}$. In the left--hand side, we have 
	$$\left|3^{x-1}\cdot2^{-(m-2)}\cdot 2^{(n-1)x}\left(1+2^{-x}-2^{-2x}\right) -1\right|\quad\text{or}\quad
	\left|3^{1-x}\cdot2^{(m-2)}\cdot 2^{-(n-1)x}\left(1+2^{-x}-2^{-2x}\right)^{-1} -1\right|,$$
	while on the right--hand side, we have
	$$
	\left(\dfrac{1+2^{-x}+2^{-2x}}{1+2^{-x}-2^{-2x}}+\dfrac{3 \cdot 2^{m-2}}{3^x\cdot 2^{(n-1)x}\left(1+2^{-x}-2^{-2x}\right)} \right)\cdot \dfrac{2}{2^{0.22k}}<(1.25+1)\cdot \dfrac{2}{2^{0.22k}}= \dfrac{4.5}{2^{0.22k}},
	$$
	or
	$$
	\left(\dfrac{3^x\cdot 2^{(n-1)x}(1+2^{-x}+2^{-2x})}{3 \cdot 2^{m-2}}+1 \right)\cdot \dfrac{2}{2^{0.22k}}\le(1+1)\cdot \dfrac{2}{2^{0.22k}}= \dfrac{4}{2^{0.22k}},
	$$
	respectively. Thus, in both cases, we have
	\begin{align}\label{eqc1}
		\left|3^{x-1}\cdot2^{-(m-2)}\cdot 2^{(n-1)x}\left(1+2^{-x}-2^{-2x}\right) -1\right|< \dfrac{4.5}{2^{0.22k}}.
	\end{align}
	Again, putting $y:=m-2-(n-3)x$ in \eqref{eqc1} and dividing through  by $2^y$, we get
	\begin{align}\label{eq:01}
		\left|\frac{3^{x-1} (2^{2x}+2^x-1)}{2^y}-1\right|<\dfrac{4.5}{2^{0.22k}}.
	\end{align}
	Again, the left--hand side in \eqref{eq:01} is 
	\begin{align*}
		\left|e^{(x-1)\log 3+\log(2^{2x}+2^x+1)-y\log 2}-1\right|,
	\end{align*}
	and the right--hand side is $<1/2$. So, we get by a simple argument in calculus that
	\begin{align*}
		\left|(x-1)\log 3+\log(2^{2x}+2^x-1)-y\log 2\right|<\dfrac{9}{2^{0.22k}}.
	\end{align*}
	This can be rewritten as 
	\begin{align*}
		\left|(x-1)\log 3+2x\log 2+\log\left(1+\frac{2^x-1}{2^{2x}}\right)-y\log 2\right|<\dfrac{9}{2^{0.22k}},
	\end{align*}
	or 
	\begin{equation}\label{eq:11}
		|(x-1)\log 3-(y-2x)\log 2|<\dfrac{9}{2^{0.22k}}+\log\left(1+\frac{2^x+1}{2^{2x}}\right)<\dfrac{9}{2^{0.22k}}+\frac{2^x-1}{2^{2x}}.
	\end{equation}
	Since $x\ge 2$,  the right--hand side in \eqref{eq:11} is still $<6/16$, so $z:=y-2x>0$ as before.  This means that the conclusion in \eqref{eq:2} still holds even in this case.
	
	The left--hand side in \eqref{eq:11} is of the form 
	\begin{align*}
		|b_1\log \gamma_1-b_2\log \gamma_2|,
	\end{align*}
	where $\gamma_1:=3,~\gamma_2:=2,~b_1:=x-1$ and $b_2:=z$. So, we take as before that $D=1$, $\log A_1=\log 3$, $\log A_2=1$ and
	$b'<3x$, by \eqref{eq:2}. Now, Theorem \ref{thm:LMNh} shows that 
	\begin{align}\label{eq:b1}
		|(x-1)\log 3-(y-2x)\log 2|>\exp\left(-24.34 \left(\max\left\{21, \log b'+0.14\right\}\right)^2\log 3\right).
	\end{align}
	Since 
	\begin{align*}
		\frac{9}{2^{0.22k}}+\frac{2^x-1}{2^{2x}}\le \frac{9}{2^{0.22k}}+\frac{1}{2^x}\le \frac{10}{2^{\min\{0.22k,x\}}},
	\end{align*}
	we compare \eqref{eq:11} and \eqref{eq:b1} to conclude that
	\begin{equation}
		\label{eq:31}
		\min \{0.22 k,x\} \log 2-\log 10<24.34 (\max\{21,\log(3x)+0.14\})^2\log 3.
	\end{equation}
	We proceed in two cases, when $\min \{0.22 k,x\}$ is $0.22k$ or $x$.
	\begin{enumerate}[(i)]
		\item Assume first that $\min \{0.22 k,x\}=0.22k$, then
		$$
		(0.22 \log 2)k<24.34( \max\{21, \log(3x)+0.14\})^2\log 3+\log 10.
		$$
		If $\max\{21, \log(3x)+0.14\}=21$, we get 
		$
		(0.22\log 2)k<24.34\cdot (21)^2\log 3+\log 260$, so $k<77350$.
		If $\max\{21, \log(3x)+0.14\}= \log(3x)+0.14$, we get 
		$$
		(0.22\log 2)k <24.34\log 3 (\log(3x)+0.14)^2+\log 10,
		$$
		so that
		$$
		k<176 (\log(3x)+0.14)^2+16<187(\log(3x)+0.14))^2,
		$$
		for all $x\ge 1$. By Lemma \ref{lem3.4}, we have 
		$$
		x<8.7\cdot 10^{30} k^7 (\log k)^8.
		$$
		Thus, 
		\begin{align*}
			x  &<  8.7\cdot 10^{30} (187(\log(3x)+0.14)^2)^{7} (\log 187+2\log(\log(3x)+0.14))^8.
		\end{align*}
		We get $x<10^{89}$.
		\item Next, if $\min \{0.22 k,x\}=x$, we then get
		$$
		x\log 2-\log 10<24.34(\max\{21,\log(3x)+0.10\})^2\log 3.
		$$
		If the maximum in the right above is in $21$, we then get $x<18000$, while if the maximum is in $\log(3x)+0.14$, we then get
		$$
		x\log 2<24.34\log 3 (\log(3x)+0.14)^2+\log 10,
		$$
		so that $x<4000$.
	\end{enumerate} 
	So, in all cases $x<10^{89}$. We now return to \eqref{eq:11} which we write as 
	\begin{align*}
		\left|\frac{\log 3}{\log 2}-\frac{y-2x}{x-1}\right|<\frac{9/2^{0.22k}+(2^x-1)/(2^{2x})}{(x-1)\log 2}<\frac{13/2^{0.22k}+2(2^x-1)/(2^{2x})}{x-1}.
	\end{align*}
	Now, we generated the $187$th convergent $p_{187}/q_{187}=[a_0,a_1,\ldots,a_{187}]=[1,1,\ldots,2]$ of $\log 3/\log 2$ and we got that $q_{187}>10^{89}$ and that  $\max\{a_k: 0\le k\le 187\}=55$. By well--known properties of continued fractions, we get 
	\begin{align*}
		\frac{1}{57(x-1)^2}<\left|\frac{\log 3}{\log 2}-\frac{y-2x}{x-1}\right|<\frac{13/2^{0.22k}+2(2^x-1)/(2^{2x})}{x-1}.
	\end{align*}
	Therefore, we have
	\begin{align*}
		\frac{1}{57(x-1)}<\frac{13}{2^{0.22k}}+\frac{2^x-1}{2^{2x-1}}.
	\end{align*}
	The left--hand side is at least $1/(57\cdot 10^{89})$, while $13/2^{0.22k}<10^{-50}$ since $k>800$. So, we get
	$$
	\frac{1}{58 (x-1)}<\frac{2^x-1}{2^{2x-1}},
	$$
	which gives $x\le 10$, as before in Subsection \ref{sub322}. Thus,
	\begin{equation}
		\label{eq:41}
		|(x-1)\log 3-(y-2x)\log 2|<\frac{9}{2^{0.22k}}+\frac{2^x-1}{2^{2x}}.
	\end{equation}
	Again, we used a simple code to compute the minimum values of the expression 
	\[
	|(x-1)\log 3 - (y-2x)\log 2|,
	\]
	for integer values of \( y - 2x \). As before, by assigning values to \( x \) ranging from 2 to 10, the code found the \( y \) that minimized the expression for each \( x \). The results still showed that the left--hand side of \eqref{eq:41} was at least 
	\[
	0.28,~0.11,~0.16,~0.23,~0.05,~0.33,~0.06,~0.22,~0.18,
	\]
	for \( x = 2, 3, \ldots, 10 \), respectively. Again, it turned out that for no $x\in \{2,3,\ldots,10\}$, inequality \eqref{eq:41} is satisfied. This finishes the argument that the case $k>800$ is not possible here. 
	
	\subsubsection{The case $k\le800$}
	To proceed, let us assume that $n > 800$, so, we can use Lemma \ref{lem3.4} to obtain upper bounds on $n$, $x$, and $m$. We assume that $x \leq 10$. Using inequality \eqref{g5}, we write $|\Lambda_5|:=|\log (\Gamma_5+1)|$ as 
	\begin{align*}
		|\Lambda_5|&:=	\left|(x-1)\log f_k(\alpha)+(x-1)\log(2\alpha-1)+\left((n-1)x-(m-1)\right)\log\alpha+\log\left(1+\alpha^{-x}-\alpha^{-2x}\right)\right|\\
		&<\dfrac{4.5}{\alpha^{0.97n-5}},
	\end{align*}
	where $a_1 := x-1$, $a_2 := x-1$, $a_3 := (n-1)x-(m-1)$, $a_4 := 1$ are integers with 
	\[
	\max\{|a_i| : 1 \leq i \leq 4\}  < m<5.6\cdot 10^{44}k^{10}(\log k)^{12}<4.8\cdot 10^{83},
	\]
	where we used Lemma \ref{lem3.4}, for all $k\le 800$.
	
	For each $k \in [2, 800]$ and $x\in [2,10]$, we used the LLL--algorithm to compute a lower bound for the smallest nonzero number of the form $|\Lambda_5|$, with integer coefficients $a_i$ not exceeding $5.6\cdot 10^{44}k^{10}(\log k)^{12}$ in absolute value. Specifically, we consider the approximation lattice
	$$ \mathcal{A}=\begin{pmatrix}
		1 & 0 & 0 & 0 \\
		0 & 1 & 0 & 0 \\
		0 & 0 & 1 & 0 \\
		\lfloor C\log f_k(\alpha)\rfloor & \lfloor C\log (2\alpha-1) \rfloor& \lfloor C\log \alpha \rfloor& \lfloor C\log \left(1+\alpha^{-x}-\alpha^{-2x}\right) \rfloor 
	\end{pmatrix} ,$$
	with $C:= 10^{335}$ and choose $y:=\left(0,0,0,0\right)$. Now, by Lemma \ref{lem2.5}, we get 
	$$c_2:=10^{-85}\qquad\text{and}\qquad l\left(\mathcal{L},y\right)\ge c_1:=1.56\cdot10^{84}.$$
	So, Lemma \ref{lem2.6} gives $S=9.3\cdot 10^{167}$ and $T=9.6\cdot 10^{83}$. Since $c_1^2\ge T^2+S$, then choosing $c_3:=4.5$ and $c_4:=\log\alpha$, we get $0.97n-5\le 1235$, or $n\le 1278$.
	
	Now, we work with $x>10$. We go to inequality \eqref{g4} and write $|\Lambda_4|:=|\log (\Gamma_4+1)|$ as 
	\begin{align*}
		|\Lambda_4|:=	\left|\left(m-1-(n-1)x\right)\log\alpha-(x-1)\log \left((2\alpha-1)f_k(\alpha)\right)\right|<\dfrac{40.5}{\alpha^{\min\{0.7n,x\}}}.
	\end{align*}
	where we have used the fact that $\min\{0.7n,x\} \geq 10$. Dividing both sides of the above inequality by $(x - 1) \log \alpha$, we obtain
	\begin{align}\label{eqr2}	
		\left| \frac{\log \left((2\alpha-1)f_k(\alpha)\right)}{\log \alpha} - \frac{m-1-(n-1)x}{x - 1} \right| < \frac{40.5}{\alpha^{\min\{0.7n,x\}}(x - 1) \log \alpha} < \frac{10}{\alpha^{\min\{0.7n,x\}}},
	\end{align}
	for all $x>10$. We proceed by examining two cases from \eqref{eqr2}.
	
	\textbf{Case I}: If $m-1 =  (n-1)x$, then inequality \eqref{eqr2} becomes
	\[
	\left| \frac{\log \left((2\alpha-1)f_k(\alpha)\right)}{\log \alpha} \right|  < \frac{10}{\alpha^{\min\{0.7n,x\}}}.
	\]
	We get 
	\begin{align*}
		\frac{10}{\alpha^{\min\{0.7n,x\}}}> \frac{\log \left((2\alpha-1)f_k(\alpha)\right)}{\log \alpha}>\frac{\log (2.2\cdot 0.5)}{\log 2}>0.1
	\end{align*}
	for all $k \in [2, 800]$. Thus, $\alpha^{\min\{0.7n,x\}}<100$, or equivalently, $\min\{0.7n,x\}<10$. This contradicts our assumptions that $n > 800$ and $x > 10$. So this case is not possible.
	
	\textbf{Case II}: If $m-1 \ne  (n-1)x$, we have
	\begin{align*}
		|\Lambda_4|:=	\left|(x-1)\log f_k(\alpha)+(x-1)\log(2\alpha-1)+\left((n-1)x-(m-1)\right)\log\alpha\right|<\dfrac{40.5}{\alpha^{\min\{0.7n,x\}}},
	\end{align*}
	where $a_1 := x-1$, $a_2 := x-1$, $a_3 := (n-1)x-(m-1)$ are integers with 
	\[
	\max\{|a_i| : 1 \leq i \leq 3\}  < m<5.6\cdot 10^{44}k^{10}(\log k)^{12}<4.8\cdot 10^{83},
	\]	
	For each $k \in [2, 800]$, we again used the LLL--algorithm to compute a lower bound for the smallest nonzero number of the form $|\Lambda_4|$, with integer coefficients $a_i$ not exceeding $5.6\cdot 10^{44}k^{10}(\log k)^{12}$ in absolute value. We consider the approximation lattice
	$$ \mathcal{A}=\begin{pmatrix}
		1 & 0 & 0 \\
		0 & 1 & 0 \\
		\lfloor  C\log f_k(\alpha)\rfloor & \lfloor C\log (2\alpha-1) \rfloor& \lfloor C\log \alpha \rfloor
	\end{pmatrix} ,$$
	with $C:= 10^{252}$ and choose $y:=\left(0,0,0\right)$. So, by Lemma \ref{lem2.5}, we get 
	$$c_2=10^{-87}\qquad\text{and}\qquad l\left(\mathcal{L},y\right)\ge c_1:=1.2\cdot10^{85}.$$
	So, Lemma \ref{lem2.6} gives $S=7.0\cdot 10^{167}$ and $T=7.2\cdot 10^{83}$. Since $c_1^2\ge T^2+S$, then choosing $c_3:=40.5$ and $c_4:=\log\alpha$, we get $\min\{0.7n,x\}\le 825$. This implies that $n\le 1178$ or $x\le 825$.
	
	Let us first assume that $n\le 1178$ holds, that is $\min\{0.7n,x\}=0.7n$. We go to inequality \eqref{g3} and write $|\Lambda_3|:=|\log (\Gamma_3+1)|$ as 
	\begin{align*}
		|\Lambda_3|:=\left|\log f_k(\alpha)+\log (2\alpha-1)+(m-1)\log\alpha-x\log\left(L_{n+1}^{(k)}\right)\right|<\dfrac{1.5}{1.4^x},
	\end{align*}
	where $a_1 := 1$, $a_2 := 1$, $a_3 := m-1$, $a_4 := -x$ are integers with 
	\[
	\max\{|a_i| : 1 \leq i \leq 4\}  < m<5.6\cdot 10^{44}k^{10}(\log k)^{12}<4.8\cdot 10^{83},
	\]
	where we used Lemma \ref{lem3.4}, for all $k\le 800$.
	
	For each $k \in [2, 800]$ and $n\in [801,1178]$, we used the LLL--algorithm to compute a lower bound for the smallest nonzero number of the form $|\Lambda_3|$, with integer coefficients $a_i$ not exceeding $5.6\cdot 10^{44}k^{10}(\log k)^{12}$ in absolute value. Specifically, we consider the approximation lattice
	$$ \mathcal{A}=\begin{pmatrix}
		1 & 0 & 0 & 0 \\
		0 & 1 & 0 & 0 \\
		0 & 0 & 1 & 0 \\
		\lfloor  C\log f_k(\alpha)\rfloor & \lfloor C\log (2\alpha-1) \rfloor& \lfloor C\log \alpha \rfloor& \left\lfloor C\log \left(L_{n+1}^{(k)}\right) \right\rfloor 
	\end{pmatrix} ,$$
	with $C:= 10^{335}$, $y:=\left(0,0,0,0\right)$, $c_2:=10^{-85}$ and $l\left(\mathcal{L},y\right)\ge c_1:=1.56\cdot10^{84}$.
	As before with $\Lambda_5$, Lemma \ref{lem2.6} gives $S=9.3\cdot 10^{167}$ and $T=9.6\cdot 10^{83}$. Choosing $c_3:=1.5$ and $c_4:=\log 1.4$, we get $x\le 1722$.
	
	Next, if $x\le 825$ holds, that is $\min\{0.7n,x\}=x$, we go back to $|\Lambda_5|$ and apply LLL--algorithm. We obtain a similar conclusion as before that $n\le 1278$. To sum up, we have that the integer solutions $n$, $m$, $k$, $x$ to \eqref{eq:main} with $n > k \ge 2$, $k \le 800$, $n > 800$ and $x \ge 2$, satisfy $n \le 1278$ and $x \le 1722$.
	
	Finally, we consider the case when $n \le 800$ and since we are working with $k \le 800$ and $n > k$, we get $k \in [2,799]$. Now, we use $\Lambda_3$ as defined before. We apply the LLL-- algorithm with 
	\[
	\max\{|a_i| : 1 \leq i \leq 4\}  < m<(n+3)x+3<803\cdot 2.6\cdot 10^{15} n k^4(\log k)^3\log n+3<1.4\cdot 10^{36},
	\]
	where we have used Lemma \ref{lem3.3}. We consider the same approximation lattice as for $\Lambda_3$, but
	with $C:= 10^{145}$. Choosing $y:=\left(0,0,0,0\right)$, we get $c_2:=10^{-39}$ and $l\left(\mathcal{L},y\right)\ge c_1:=4.0\cdot10^{37}$.
	Lemma \ref{lem2.6} gives $S=7.84\cdot 10^{72}$ and $T=2.8\cdot 10^{36}$. Choosing $c_3:=1.5$ and $c_4:=\log 1.4$, we get $x\le 736$.
	
	To conclude the case $n>k$, we look for solutions to \eqref{eq:main} in the ranges  $k\in [2, 800]$, $n \in [k + 1, 800]$ and $x \in [2, 736]$ or $n \in [801, 1278]$ and $x \in [2, 1722]$, all with $nx-3 <m<(n+3)x+3$. A computational search in SageMath 9.5 allows us to conclude that there are no other integral solutions to \eqref{eq:main} apart from those given in Theorem \ref{thm1.1}. To efficiently handle large \( L_n^{(k)} \) values, our SageMath 9.5 code utilized batch processing, iterating through all \( (n, m,k, x) \) combinations within these ranges. \qed

	\section*{Acknowledgments} 
	We thank the school of mathematics at Stellenbosch university for providing the necessary resources and support during this research. Additionally, we are grateful for the use of virtual SageMath 9.5 libraries that made computations in this work possible.

	\section*{Addresses}
	
	$ ^{1} $ Mathematics Division, Stellenbosch University, Stellenbosch, South Africa.
	
	Email: \url{hbatte91@gmail.com} \qquad\url{https://orcid.org/0000-0003-3882-0189}
	\vspace{0.3cm}\\
	\noindent 
	$ ^{2} $ Centro de Ciencias Matem\'aticas UNAM, Morelia, Mexico.
	
	Email: \url{fluca@sun.ac.za}\qquad \url{https://orcid.org/0000-0003-1321-4422}
	
\end{document}